\newtheorem{theorem}{Theorem}
\newtheorem{corollary}[theorem]{Corollary}
\newtheorem{lemma}[theorem]{Lemma}
\newtheorem{proposition}[theorem]{Proposition}
\newcommand{\ZZ}{{\rm\bf Z}}
\newcommand{\OO}{{\rm\bf O}}
\newcommand{\RR}{{\rm\bf R}}
\newcommand{\XX}{{\rm\bf X}}
\newcommand{\EU}{{\rm\bf S}}
\newcommand{\vv}{{\rm\bf v}}
\newcommand{\ww}{{\rm\bf w}}
\newcommand{\dpt}{\displaystyle}
\title[Spiralling Dynamics]{Spiralling dynamics near  heteroclinic networks}
\date{\today}
\author[Alexandre A. P. Rodrigues \and
Isabel S. Labouriau]{Alexandre A. P. Rodrigues \and
Isabel S. Labouriau}
\address{ Centro de Matem\'atica
da Universidade do Porto \\ and 
Faculdade de
Ci\^encias, Universidade do Porto \\
Rua do Campo Alegre,
687, 4169-007 Porto, Portugal}
\thanks{ CMUP is supported by the European Regional Development Fund through the programme COMPETE and by the
Portuguese Government through the Funda\c{c}\~ao para a Ci\^encia e a Tecnologia (FCT) under the project PEst-C/MAT/UI0144/2011.
A.A.P. Rodrigues was supported by the grants SFRH/BD/28936/2006 and SFRH/BPD/84709/2012 of FCT}
\email[A.A.P. Rodrigues]{  alexandre.rodrigues@fc.up.pt}
\email[I.S. Labouriau]{ islabour@fc.up.pt} 
\begin{document}

\begin{abstract}
There are few explicit examples in the  literature  of vector fields exhibiting complex dynamics that may be proved analytically.
We construct explicitly a  {two parameter family of vector fields}  on the three-dimensional sphere $\EU^3$, whose flow has a spiralling attractor containing the following: two hyperbolic equilibria, heteroclinic trajectories connecting them  {transversely} and a non-trivial  hyperbolic, invariant and transitive set. The spiralling set unfolds a heteroclinic network between two symmetric saddle-foci and contains a sequence of topological horseshoes semiconjugate to full shifts over an alphabet with more and more symbols, {coexisting with Newhouse phenonema}. The vector field is the restriction to $\EU^3$ of a polynomial vector field in $\RR^4$. In this article, we also identify global bifurcations that induce chaotic dynamics of different types.
\end{abstract}

\maketitle

\textbf{Keywords:} 

{Heteroclinic network, Spiralling set, Polynomial vector field, {Quasistochastic attractor}}\\
\bigbreak
\textbf{AMS Subject Classifications:}

{Primary: 34C28; Secondary: 34C37, 37C29, 37D05, 37G35, 37G40}
\bigbreak
\bigbreak

\bigbreak
\bigbreak

\bigbreak
\bigbreak
\bigbreak
\bigbreak

Address of Alexandre A. P. Rodrigues \and
Isabel S. Labouriau:\\
{ Centro de Matem\'atica
da Universidade do Porto \\ and 
Faculdade de
Ci\^encias, Universidade do Porto \\
Rua do Campo Alegre,
687, 4169-007 Porto, Portugal}\\
Phone (+351) 220 402 211   
and (+351) 220 402 248   Fax (+351) 220 402 209

\bigbreak

\newpage
\section{Introduction}

Any $C^1$  vector field defined on a compact three-dimensional manifold may be approximated by a system of differential equations whose flow exhibits one of the following phenomena: uniform hyperbolicity, a heteroclinic cycle associated to, at least, one equilibrium and/or a homoclinic tangency of the invariant manifolds of the periodic solutions --- see Arroyo \emph{et al} \cite{Arroyo}. 
However, there are no examples of vector fields exhibiting all three features simultaneously.

In this article, we construct an explicit example of a $C^\infty$ vector field on the three-dimensional sphere $\textbf{S}^3$ that is approximated by differential equations exhibiting all the three behaviours. Nearby differential equations may also display heteroclinic tangencies of invariant manifolds of two equilibria. We describe and characterise some properties of the flow of these differential equations, whose complex geometry arises from spiralling behaviour induced by the presence of saddle-foci. 
{ The complex nature of the geometry can be described analytically since our example is close to a highly symmetric vector field}
{that, by construction, exhibits special features.} 
We start with a brief discussion of the literature on examples of this kind.

\subsection{Lorenz-like attractors}
Few explicit examples of vector fields are known, whose flow contains non-hyperbolic invariant sets that are transitive and for which transitivity is robust to small $C^1$  perturbations.
The most famous example is the expanding \emph{butterfly} proposed by E. Lorenz in 1963 \cite{Lorenz} that arises in a flow having an equilibrium at the origin, where
the linearisation has eigenvalues $\lambda_u, \lambda_s^1, \lambda_s^2 \in \RR \backslash \{0\}$  satisfying:
\begin{equation}
\label{eigenvalues_exp}
\lambda_s^2<\lambda_s^1<0<\lambda_u \quad \text{and} \quad \lambda_u+\lambda_s^1>0.
\end{equation}
In a three-dimensional manifold, an equilibrium whose linearisation has real eigenvalues satisfying condition (\ref{eigenvalues_exp}) is what we call an \emph{equilibrium of Lorenz-type}.

In order to understand the Lorenz differential equations and the phenomenon of robust coexistence in the same transitive set of an equilibrium and regular trajectories accumulating on it, geometric models have been constructed independently by Afraimovich \emph{et al} \cite{ABS} and Guckenheimer and Williams \cite{GW}. The construction of these models have been based on properties suggested by numerics. 

Morales \emph{et al} \cite{MPP} unified the theory of uniformly hyperbolic dynamics and Lorenz-like flows stressing that the relevant notion for the general theory of robustly transitive sets is the \emph{dominated splitting}. More precisely, they have proved that if $\Lambda$ is a robustly transitive attractor containing at least one equilibrium of Lorenz-type, then $\Lambda$ must be partially hyperbolic with volume expanding directions, up to reversion of time.

In order to { make the discussion more rigorous,}
recall that a compact flow-invariant set $\Lambda$ is \emph{partially hyperbolic} if, {up to time reversal}, there is an invariant splitting $T \Lambda= E^s \oplus E^{cu}$ for which there are $K, \lambda \in \textbf{R}^+$ such that for $\forall t>0, \forall x \in \Lambda$:
\begin{itemize}
\item $||\partial_x \phi(t,x)|_{E^s_x}|| \leq K e^{-\lambda t}$; 
\item $||\partial_x \phi(t,x) |_{E^s_x} ||.||\partial_x \phi(t,x)|_{E^{cu}_{\phi(t,x)}}|| \leq  K e^{-\lambda t}$,
\end{itemize}
{where $\phi(t,p)$ is the unique solution $x(t)$ of the initial value problem $\dot{x}=f(x)$, $x(0)=p,$ and $f: \textbf{R}^4 \rightarrow \textbf{R}^4$ is a smooth vector field.}
The direction $E^{cu}$ of $\Lambda$ is \emph{volume expanding} if 
$$
\forall t>0, \quad   \forall x \in \Lambda,\quad  det |\partial_x \phi(t,x)|_{E_x^{cu}}| \geq K e^{\lambda t}.
$$
 The Lorenz model satisfies these conditions.  
A good explanation about this subject may be found in Ara\'ujo and Pac\'ifico \cite[Chapter 3]{AP}, where it is shown that in a three-dimensional manifold the only equilibria that exist near a robustly transitive set must be of Lorenz-type \emph{ie} where condition (\ref{eigenvalues_exp}) holds --- see also Bautista \cite{Bautista}.

\subsection{Contracting Lorenz Models}
In 1981, Arneodo, Coullet and Tresser \cite{ACT1} {started the study of} \emph{contracting Lorenz models}, {whose flows contain attractors that persist only in a measure theoritical sense.}  The authors considered a variation of the classical Lorenz model with respect to the eigenvalues at the origin, in which the condition (\ref{eigenvalues_exp}) is replaced by:
\begin{equation}
\label{eigenvalues_cont}
\lambda_s^2<\lambda_s^1<0<\lambda_u \quad \text{and} \quad \lambda_u+\lambda_s^1<0.
\end{equation}

In 1993, Rovella \cite{Rovella} proved that there exists a \emph{contracting Lorenz model} $\dot{x}=f(x)$, $x \in \textbf{R}^3$ with an attractor $\Lambda$ containing an equilibrium so that the following hold: there exists a local basin of attraction $\mathcal{B}$ of $\Lambda$, a neighbourhood $U$ of $f$ (in the $C^3$-- topology) and an open and dense subset $U_0 \subset U$ so that for $\dot{x}=g(x)$ ($g \in U_0$), the maximal invariant set in $\mathcal{B}$ consists of the equilibria, one or two periodic trajectories, a hyperbolic suspended horseshoe and heteroclinic connections. Moreover, Rovella \cite{Rovella} proved that in generic two parameter families $\dot{x}=f(x, \mu)$, with $f(\star, \overline{0})\equiv f$, there is a set of positive measure containing $\mu=(0,0)$ for which an attractor in $\mathcal{B}$ containing the equilibrium exists.

The construction of the Rovella attractor \cite{Rovella} is similar to the geometric Lorenz model. Some authors constructed contracting  Lorenz-like examples through bifurcations from heteroclinic cycles and networks: for instance, Afraimovich \emph{et al} \cite{ACLiu} describe a codimension 1 bifurcation leading from Morse-Smale flows to Lorenz-like attractors; Morales \cite{Morales} constructed a singular attractor from a hyperbolic flow, through a saddle-node bifurcation. All of these are similar to the expanding Lorenz attractor, for which condition (\ref{eigenvalues_exp}) hold, as opposed to the contracting case.

\subsection{Spiralling attractors}
Another type of persistent attractor with a much more complex geometry occurs near homoclinic and heteroclinic cycles associated to either a saddle-focus or a non-trivial periodic solution.
{This} complexity is the reason why the study of this subject was left almost untouched for 20 years from L. P. Shilnikov  \cite{Shilnikov65, Shilnikov68} 
 to P. Holmes \cite{Holmes}. Due to the existence of complex eigenvalues of the linearisations at the equilibria, the spiral structure of the non-wandering set predicted by Arneodo, Coullet and Tresser \cite{ACT2} has been observed in some simulations in the context of the modified Chua's circuit --- these attractors are what Aziz-Alaoui \cite{Aziz} call \emph{spiralling attractors}. 

Spiralling attractors {are} expected for perturbations of {homo and heteroclinic} cycles involving saddle-foci under a dissipative condition.
In Kokubu and Roussarie \cite{KRoussarie}, the classic Lorenz model is considered as a particular case of a model whose flow contains a heteroclinic cycle.
In this article, the authors carefully choose a perturbation of the classic Lorenz model\
 and have found other types of chaotic dynamics such as H\'enon-like chaotic attractors and Lorenz attractors with hooks. There is some evidence that in this case spiral attractors exist and that they might be persistent in the sense of measure \cite{ACT1, ACT2}. A sequence of topological horseshoes semiconjugate to full shifts on an alphabet with {more and more symbols} might occur near this kind of attractors.

Nowadays, particular attention is being given to the study of the dynamics near heteroclinic networks with complex behaviour and multispiral attractors. 
Spiralling dynamics near a large heteroclinic network of rotating nodes has been studied by Aguiar \emph{et al} \cite{ACL BIF CHAOS, ACL NONLINEARITY}, motivated by a conjecture of Field \cite{Field}.
A symmetry reduction argument yields a {quotient} network with two saddle-foci of different Morse indices reminiscent of those studied by Bykov \cite{Bykov93, Bykov99}. In \cite{ACL NONLINEARITY}, the existence of suspended horseshoes with the shape of
the network has also been proved.

In a similar context and under the assumption that near the saddle-foci solutions wind in the same direction around the one-dimensional heteroclinic connection, Labouriau and Rodrigues \cite{LR} proved the emergence of an intricate behaviour arising in a specific type of symmetry-breaking; no explicit examples have been given.
This creates an interest in the construction of explicit vector fields whose flows have a specific type of invariant set and for which it is possible to give an analytical proof of the properties that guarantee the existence of complex behaviour.

In this article, we describe an explicit example of a polynomial vector field on the three-dimensional sphere $\EU^3$ whose flow has a heteroclinic network with two saddle-foci, and a spiralling structure containing a hyperbolic non-trivial transitive set {enclosing periodic solutions with different stability indices --- this is what Gonchenko \emph{et al} \cite{GST} call a quasistochastic attractor}. We show that the spiralling set is not robustly transitive, but it presents some similarities to { Rovella's example \cite{Rovella},}
 {in the sense that it might be}  {measure-theoretically} {persistent.}

\subsection{Heteroclinic terminology}

Throughout this paper, by \emph{heteroclinic cycle} we mean a set of finitely many disjoint hyperbolic equilibria $p_j$ (also called \emph{nodes}), $j \in \{1,\ldots, k\}$ and trajectories $\xi_j$, $j \in \{1,\ldots, m\}$ such that:
$$
\lim_{t \rightarrow +\infty} \xi_j(t)= p_{j+1}= \lim_{t \rightarrow -\infty} \xi_{j+1}(t),
$$
called \emph{ heteroclinic trajectories}, with the understanding that $\xi_{m+1}=\xi_1$ and $p_{k+1}=p_1$. We also allow connected $n$--dimensional 
manifolds of solutions biasymptotic to the nodes $p_i$ and $p_j$, in negative and positive time, respectively. 
 In both cases we denote the connection by $[p_i\to p_j]$. Throughout this article, the nodes are hyperbolic; the dimension of the local unstable manifold of an equilibrium $p$ will be called \emph{the Morse index} of $p$.

A \emph{heteroclinic network} is a connected set that is the union of heteroclinic cycles, where in particular, for any pair of saddles in the network, there is a sequence of heteroclinic connections that links them.
Heteroclinic cycles arise robustly in differential equations that are equivariant under the action of a group of symmetries as a connected
component of the group orbit of a heteroclinic cycle.  We refer the reader to Golubitsky and Stewart \cite{GS} for more information about heteroclinic cycles and symmetry in differential equations.

A  \emph{Bykov cycle} is a heteroclinic cycle with two saddle-foci of different Morse indices, in which the one-dimensional invariant manifolds coincide and the two-dimensional invariant manifolds have an isolated transverse intersection. 

\section{The vector field and a framework of the article}
\label{Sect2}
\subsection{Motivation}
It is difficult to find explicit examples for which one can prove that Bykov cycles are present, although these cycles are unavoidable features in one-parameter families of vector fields in three dimensions.
Working with vector fields with symmetry simplifies this task in several ways.
The most obvious one is that symmetries imply the existence of flow-invariant submanifolds, on which it is easier to find the fragile connection of one-dimensional manifolds.
In a two-dimensional flow-invariant set, this may appear as a saddle-to-sink connection, that persists under perturbations that preserve the symmetry.
Examples with simple polynomial forms of low degree are natural in symmetric contexts.
A simple polynomial form makes computations easier and allows us to prove the transverse intersection of two-dimensional invariant manifolds.

Vector fields that are close to more symmetric ones are easier to analyse, as is done in \cite{ACL BIF CHAOS, ACL NONLINEARITY,LR}.
In order to illustrate the symmetry breaking reported in \cite{LR}, we  start by defining an organizing center with spherical symmetry whose flow contains an attracting network lying on the unit sphere $\EU^3\subset \RR^4$. By gradually breaking the symmetry in a two-parameter family, we obtain a wide range of dynamical phenomena going from sinks to a spiralling structure of suspended horseshoes and cascades of saddle-node bifurcations near a Shilnikov cycle. The construction is amenable to the analytic proof of the features that guarantee the existence of complex behaviour. More precisely, the { symmetry-breaking}
terms have been chosen in such a way that:
\begin{itemize}
\item the unit sphere $\EU^3$ remains flow-invariant and globally attracting;
\item the cycles persist (although they change their nature);
\item the symmetries are broken gradually and independently;
\item  it is possible to show analytically that the two-dimensional invariant manifolds of the nodes meet  transversely .
\end{itemize}

We are interested in dynamics on a compact manifold, in order to have control of the long-time existence and behaviour of solutions.

\subsection{The vector field}
Our object of study is the  two parameter family of vector fields 
$\XX(X,\lambda_1,\lambda_2)$ on the unit sphere $\EU^3\subset\RR^{4}$, defined for 
$X=(x_1,x_2,x_3,x_4)\in\EU^3$ by  the differential equation in $\RR^{4}$:
\begin{equation}
\left\{
\begin{array}{l}
\dot{x}_{1}=x_{1}(1-r^2)-x_2-\alpha_1x_1x_4+\alpha_2x_1x_4^2+\lambda_2 x_3^2x_4 \\
\dot{x}_{2}=x_{2}(1-r^2)+x_1-\alpha_1x_2x_4+\alpha_2x_2x_4^2 \\
\dot{x}_{3}=x_{3}(1-r^2)+\alpha_1x_3x_4+\alpha_2x_3x_4^2+\lambda_1 x_1x_2x_4 -\lambda_2 x_1x_3x_4 \\
\dot{x}_{4}=x_{4}(1-r^2)-\alpha_1(x_3^2-x_1^2-x_2^2)-\alpha_2x_4(x_1^2+x_2^2+x_3^2)-\lambda_1 x_1x_2x_3 \\
\end{array}
\label{example}
\right.
\end{equation}
where  $r^2=x_{1}^{2}+x_{2}^{2}+x_{3}^{2}+x_{4}^{2}$ and {$\alpha_2<0<\alpha_1$} with $\alpha_1+\alpha_2>0$.

There are two equilibria given by:
$$
\vv =(0,0,0,+1) \quad \text{and} \quad \ww =(0,0,0,-1)
$$
and the linearisation of $\XX(X,\lambda_1,\lambda_2)$ at $(0,0,0,\varepsilon)$ with 
$\varepsilon=\pm 1$ has eigenvalues $$\alpha_2-\varepsilon\alpha_1\pm i \quad \text{and} \quad
\alpha_2+\varepsilon\alpha_1.$$
Then, under the conditions above, $\vv$ and $\ww$ are hyperbolic saddle-foci,
$\vv$ has one-dimensional unstable  manifold and two-dimensional stable manifold; $\ww$ has one-dimensional stable manifold and two-dimensional unstable manifold.
The only other equilibrium of \eqref{example} is the origin.

\begin{figure}
\begin{center}
\includegraphics[height=3cm]{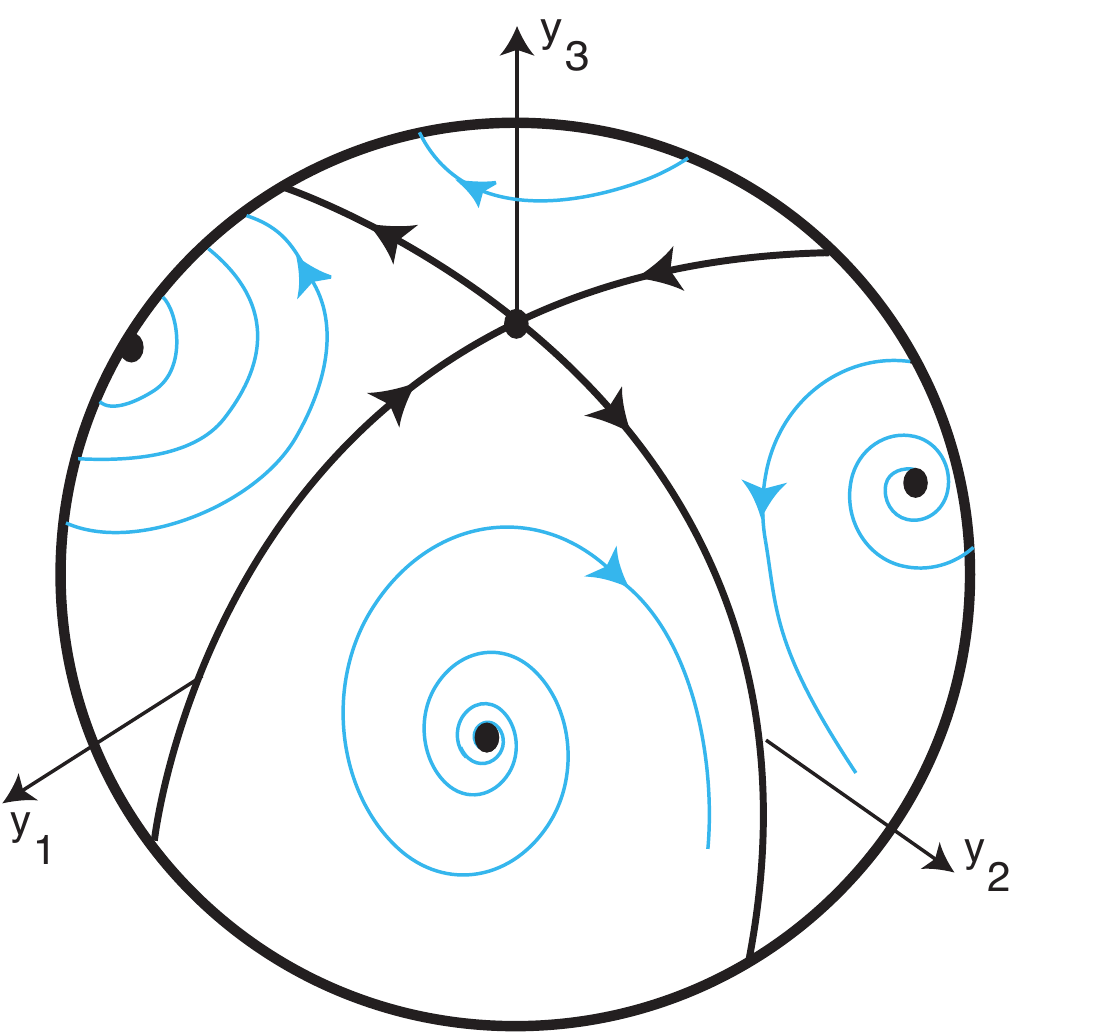}
\end{center}
\caption{\small Dynamics of the intermediate three-dimensional step in the construction of the organising centre $\XX_0(X)=\XX(X,0,0)$. }
\label{Flow2}
\end{figure}

Equation \eqref{example} was obtained using a general construction described in  Aguiar \emph{et al} 
\cite{ACL BIF CHAOS}, that we proceed to summarise in the particular case used here.
Start with the differential equation $\dot Y=(1-|Y^2|) Y$ for  $Y=(y_1,y_2,y_3)\in\RR^3$, for which the unit sphere $\EU^2$ is  globally attracting and all its points are equilibria.
Then consider a finite subgroup $G\subset \OO(3)$ containing 
$$
d(y_1,y_2,y_3)=(y_1,-y_2,y_3)\quad
\mbox{and}\quad
\kappa(y_1,y_2,y_3)=(-y_1,y_2,y_3).
$$
%
The second step is to add two $G$-symmetric perturbing terms, $\alpha_1A(Y)$ and
$\alpha_2B(Y)$, to the differential equation.
The two terms are chosen to be tangent to $\EU^2$, so this sphere is still flow-invariant.
From the symmetry, it follows that the two planes $\{d(Y)=Y\}$ and $\{\kappa(Y)=Y\}$ are also flow-invariant
and the dynamics is that of Figure~\ref{Flow2}.

Now, add the equation $\dot\theta=1$ and interpret $(\theta,y_1)$ as polar coordinates in $\RR^2$.
In the new variables $(x_1,x_2,x_3,x_4)=(y_1\cos\theta,y_1\sin\theta,y_2,y_3)$ this is equation  \eqref{example} for $\lambda_1=\lambda_2=0$,
so, by construction, the unit sphere $\EU^3$ is  invariant under the flow of \eqref{example} for $\lambda_1=\lambda_2=0$ and every trajectory with nonzero initial condition is asymptotic to it in forward time.
Hence, \eqref{example} defines a family of  vector fields  
$\XX(X,0,0)$ on $\EU^3$. 
The invariant  circle  $\{d(Y)=Y\}$ gives rise to an invariant two-sphere  and $\{\kappa(Y)=Y\}$ gives rise to an invariant circle.

Other properties of  \eqref{example} with $\lambda_1=\lambda_2=0$ that follow by construction, are given 
Aguiar \emph{et al} \cite[Theorem 10]{ACL BIF CHAOS}, we describe them briefly.


The group of symmetries of $\XX_0(X)=\XX(X,0,0)$ is isomorphic to $\textbf{SO(2)} \oplus \ZZ_2(\gamma_2)$ where $\psi_\theta \in \textbf{SO(2)}$ acts as:
$$
\psi_\theta(x_1, x_2,x_3,x_4)=(x_1 \cos\theta-x_2 \sin\theta,x_1 \sin\theta+x_2 \cos\theta, x_3,x_4)
$$
and $\gamma_2 \in \ZZ_2(\gamma_2)$ acts as:
$$
\gamma_2(x_1, x_2,x_3,x_4)=(x_1, x_2,-x_3,x_4).
$$
This comes from the symmetry group $G$, plus the rotation into $\RR^4$.

The one-dimensional invariant manifolds of $\vv$ and $\ww$ lie inside the invariant circle $Fix(\textbf{SO(2)}) \cap \EU^3$ and the two-dimensional  invariant manifolds  lie in the invariant two-sphere $Fix(\ZZ_2(  \gamma_2 ))\cap \EU^3$. 
Thus, symmetry forces the invariant manifolds of $\vv$ and $\ww$ to be in a very special position: they coincide, see Figure~\ref{orientation_exemplo}.
The two saddle-foci, together with their invariant manifolds form a heteroclinic network 
$\Sigma^0$ that is asymptotically stable by the criteria of Krupa and Melbourne \cite{Krupa e Melbourne 1, Krupa e Melbourne}.
Indeed since $\alpha_2<0<\alpha_1$, it follows that:
$$
\rho=\frac{C_\vv}{E_\vv}\frac{C_\ww}{E_\ww}=\left(\frac{\alpha_2-\alpha_1}{\alpha_2+\alpha_1}\right)^2>1,
$$
where $E_p$ and $C_p$ denote the real parts of the expanding and contracting eigenvalues of $DX_0$ at
{ $p$, where $p\in \{\vv,\ww\}$}.

\begin{figure}
\begin{center}
\includegraphics[height=7cm]{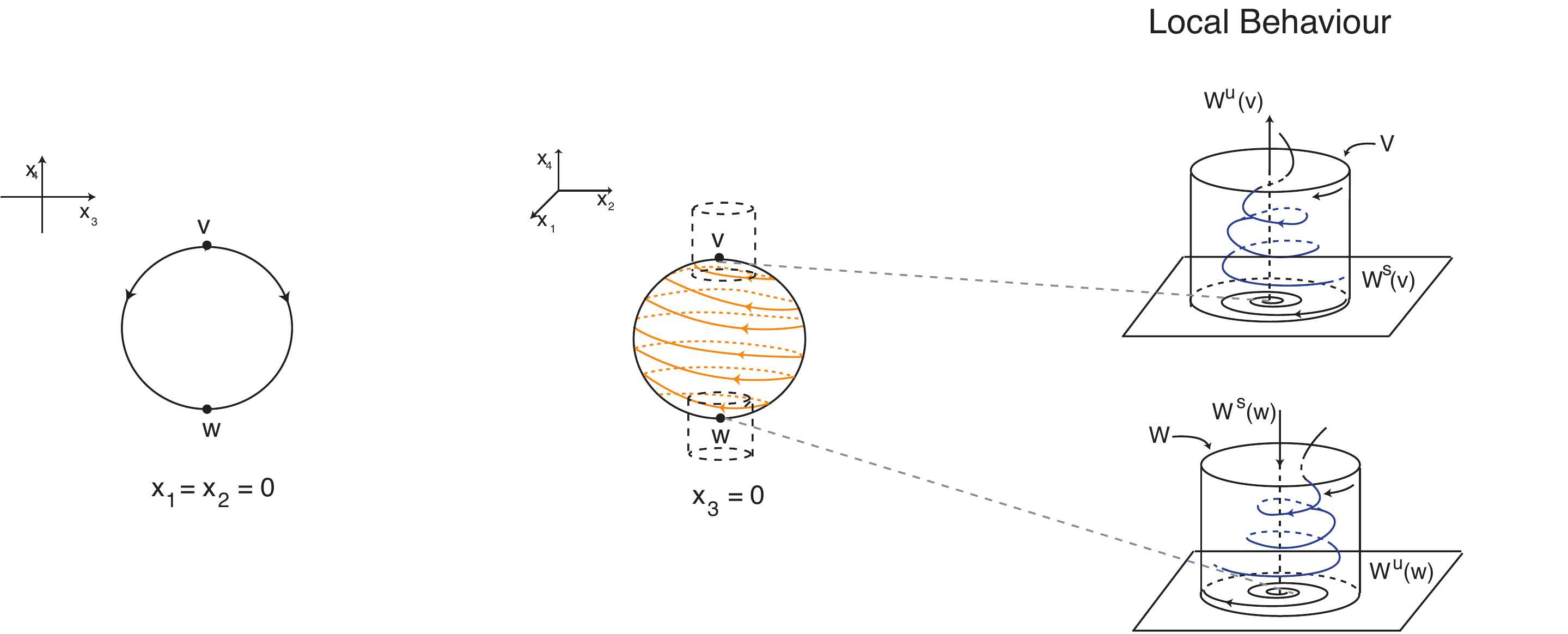}
\end{center}
\caption{\small Dynamics of the organising centre $\XX_0(X)=\XX(X,0,0)$. Left: The one-dimensional heteroclinic connection from $\vv$ to $\ww$ on the invariant circle $Fix(\textbf{SO(2)})\cap\EU^3$. 
Centre: The two-dimensional heteroclinic connection from $\ww$ to $\vv$ on the invariant two--sphere $Fix(\ZZ_2(\gamma_2)) \cap \EU^3$. Right: Open neighbourhoods of $\vv$ and $\ww$, inside which the direction of solutions turning around the connection $[\vv \rightarrow \ww]$ is the same. }
\label{orientation_exemplo}
\end{figure}

The network $\Sigma^0$ can be decomposed into two cycles. Due to the symmetry and to the asym\-ptotic stability, trajectories whose initial condition starts outside the invariant fixed point subspaces will approach in positive time one of the cycles. The fixed point hyperplanes prevent random visits to the two cycles; a trajectory that approaches one of the cycles in $\Sigma^0$ is shown in Figure \ref{simulation3(geom)}. The time series of the figure shows the increasing intervals of time spent near the equilibria. The sejourn time in any neighbourhood of one of saddle-foci increases geometrically with ratio $\rho$.

\subsection{Symmetry breaking along the article}\label{SymmetryAlong}
The symmetries of the organising centre are broken when either $\lambda_1$ or $\lambda_2$ is not zero. 
This was done by adding  two  terms $\lambda_1F_1(X)$ and $\lambda_2F_2(X)$ that break the $\textbf{SO(2)} \oplus \ZZ_2(\gamma_2)$ symmetry in different ways  --- see Appendix~\ref{AppendixTable}.
Both $F_1$ and $F_2$ are chosen to be  tangent to $\EU^3$, therefore $\XX(X,\lambda_1,\lambda_2)$ is still a well defined vector field on $\EU^3$.

In Section~\ref{Transverse}, after some additional results on $\XX_0(X)$, we  discuss briefly the dynamics of 
$\XX(X,0,\lambda_2)$, with $\lambda_2\ne 0$, when the rotational symmetry $\textbf{SO}(2)$ is broken, destroying the network $\Sigma^0$. 
Then we exploit the more dynamically interesting situation that arises for $\lambda_1\ne 0$ and $\lambda_2=0$.
In this case the reflection symmetry $\ZZ_2$ is broken, as well as part of the 
$\textbf{SO}(2)$--symmetry. 
The two-dimensional connection breaks into a pair of one-dimensional ones, 
but due to the remaining symmetry, the one-dimensional connections from $\vv$ to $\ww$ are preserved.
This gives rise to two Bykov cycles.
Analytical proof of transverse intersection of two invariant manifolds is usually difficult to obtain but this can be achieved in our example, and  is done in Appendix~\ref{AppendixTransversality}. Similar heteroclinic bifurcations have been studied in the context of a model of the long Josephson junction \cite{Berg}.

\begin{figure}
\begin{center}
\includegraphics[height=11cm]{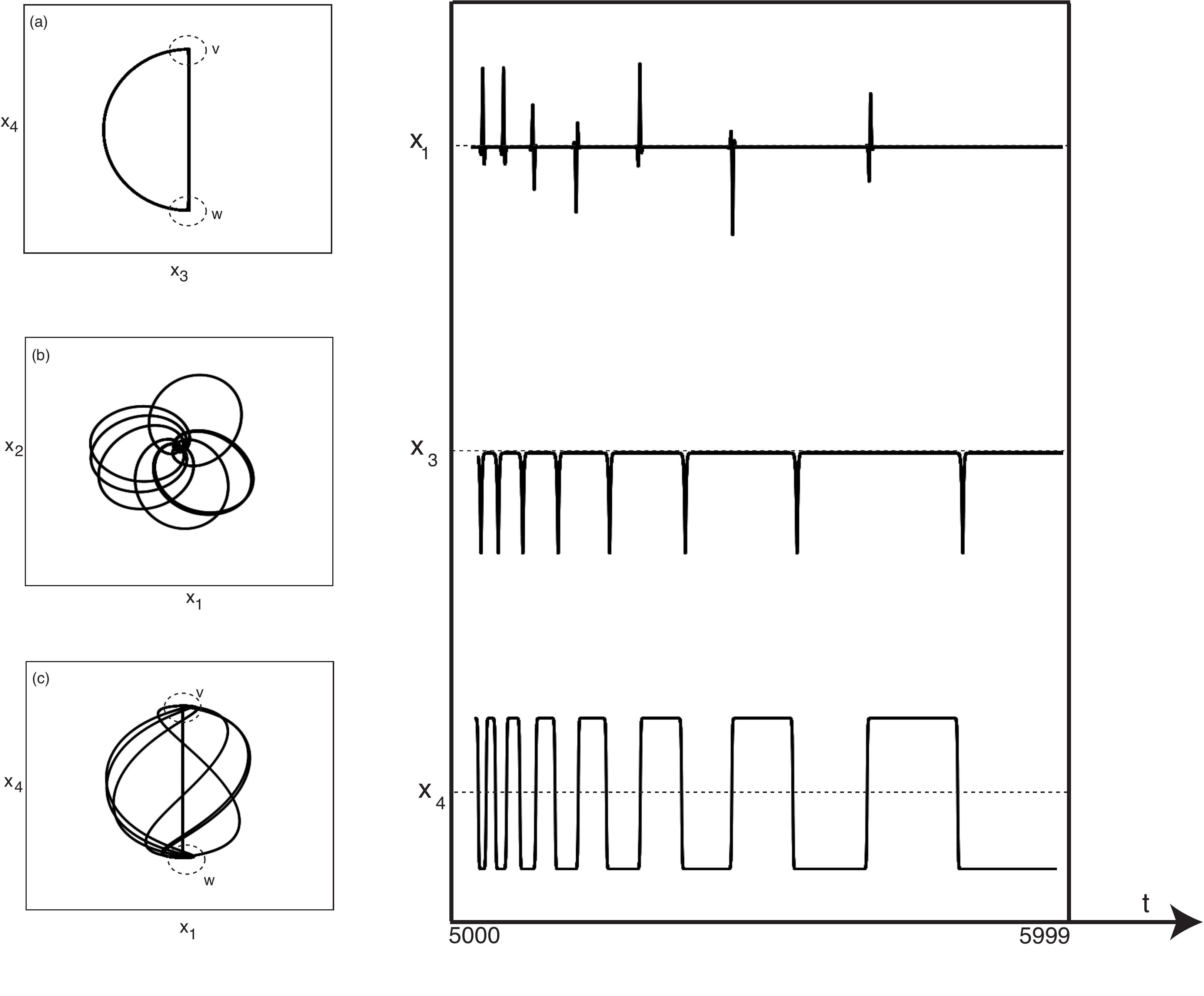}
\end{center}
\caption{\small  Left: Projection in the $(x_3,x_4)$, $(x_1,x_2)$ and $(x_1,x_4)$--planes of the trajectory with initial condition $(-0.5000, -0.1390, -0.8807, 0.3013)$ for the flow corresponding to $\XX_0(X)=\XX(X,0,0)$,
 with $\alpha_1=1$, $\alpha_2=-0.1$. Right: Corresponding time series.
The simulations omit the initial transient (the independent variable $t$ varies between 5000 and 5999). }
\label{simulation3(geom)}
\end{figure}

When both $\lambda_1$ and $\lambda_2$ are non zero, all the symmetry is broken,
hence the one-dimensional connection from $\vv$ to $\ww$ disappears.
Although the attracting heteroclinic cycles disappears, some nearby attracting structures remain, this is discussed in Section~\ref{BreakAllSymmetries}. Our results and conjectures are illustrated by numerical simulations, which have been obtained using {\emph{Matlab} and  the dynamical systems package \emph{Dstool}.}

\section{Partial symmetry breaking}\label{Transverse}
 The existence of the attracting heteroclinic network $\Sigma^0$ for $\XX_0$  requires two separate symmetries to allow structurally stable connections within the two invariant fixed point subspaces. Throughout this paper, we control the degree to which the two symmetries  
 are broken by two  parameters, $\lambda_1$ and $\lambda_2$: $\lambda_1$ controls the magnitude of the $\textbf{SO(2)} \times \ZZ_2 (\gamma_2)$--symmetry-breaking term and $\lambda_2$ controls the breaking of the pure reflectional symmetry  {$\ZZ_2 (\gamma_1)$ where $\gamma_1=\Psi_\pi \in \textbf{SO(2)}$}, $\gamma_1(x_1, x_2,x_3,x_4)=(-x_1, -x_2,x_3,x_4)$. The parameters $\lambda_1$ and $\lambda_2$ play exactly the same role as in \cite{LR}.
\medbreak
Before discussing the effects of breaking the symmetry in \eqref{example} when either 
$\lambda_1$ or $\lambda_2$ is non zero, we need some additional information on the fully symmetric case.

\subsection{The organising centre}\label{OrganisingCentre} 

For $\XX_0$, there are two possibilities for the geometry of the flow around each saddle-focus of the network $\Sigma^0$, depending on the direction solutions turn around $[\vv \rightarrow \ww]$, as discussed in Labouriau and Rodrigues \cite{LR} . The next proposition shows that each solution when close to $\vv$ turns in the same direction as when close to $\ww$. 

\begin{proposition}
\label{orientation}
In $\EU^3$, there are open neighbourhoods $V$ and $W$ of $\vv$ and $\ww$, respectively, such that, for any
trajectory  of $\XX_0$ going from $V$ to $W$, the direction of its turning around the connection $[\vv \rightarrow \ww]$ is the same in $V$ and in $W$.
\end{proposition}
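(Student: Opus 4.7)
The plan is to exploit the $\textbf{SO(2)}$-equivariance of $\XX_0$ in order to reduce the whole question to a single computation of an angular velocity, after which the result will follow globally, not just near $\vv$ and $\ww$. The idea is that the direction of turning around the connection is controlled by the sign of a single scalar quantity defined on all of $\EU^3\setminus Fix(\textbf{SO(2)})$, and this sign will turn out to be constant.

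First I would pass to cylindrical coordinates $x_1=r_{12}\cos\phi$, $x_2=r_{12}\sin\phi$ adapted to the rotational symmetry. Since $Fix(\textbf{SO(2)})\cap\EU^3=\{x_1=x_2=0\}\cap\EU^3$ and the heteroclinic connection $[\vv\to\ww]$ lies on this invariant circle, the sense in which a solution turns around $[\vv\to\ww]$ while in a small neighbourhood of either endpoint is precisely the sign of $\dot\phi$. Next I would compute
\[
\dot\phi=\frac{x_1\dot x_2-x_2\dot x_1}{r_{12}^2}
\]
by direct substitution into equation \eqref{example} with $\lambda_1=\lambda_2=0$. The contributions of the $(1-r^2)$ factor and of the $\alpha_1,\alpha_2$-perturbations to $\dot x_1$ and $\dot x_2$ are manifestly proportional to $(x_1,x_2)$, reflecting the $\textbf{SO(2)}$-equivariance of these terms, so they cancel in the wedge $x_1\dot x_2-x_2\dot x_1$. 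What remains are the pure rotation terms $-x_2$ in $\dot x_1$ and $+x_1$ in $\dot x_2$, giving $x_1\dot x_2-x_2\dot x_1=x_1^2+x_2^2$ and hence $\dot\phi\equiv 1$ everywhere off the invariant axis.

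Finally, I would choose $V$ and $W$ to be any open neighbourhoods of $\vv$ and $\ww$ in $\EU^3$ (for instance, small spherical caps). Along any trajectory segment in $V\cup W$ that stays off $Fix(\textbf{SO(2)})$, the coordinate $\phi$ increases monotonically at rate one, so the direction of turning is the same (counterclockwise in the $(x_1,x_2)$-plane) in both $V$ and $W$. The only thing that needs verification is the cancellation in the wedge computation, which is immediate from the $\textbf{SO(2)}$-equivariant polynomial form of $\XX_0$; I do not foresee a genuine obstacle, and the argument in fact delivers a global statement stronger than the one proposed.
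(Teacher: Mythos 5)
Your proof is correct and is essentially the same as the paper's: the paper also reduces the claim to the observation that the angular coordinate around $Fix(\textbf{SO(2)})$ satisfies $\dot\varphi=1$ (quoting the spherical-coordinate form of the equations from the appendix), whereas you verify the identical fact by computing $x_1\dot x_2-x_2\dot x_1=x_1^2+x_2^2$ directly from the Cartesian equations. The cancellation you invoke does hold, and your version makes the global nature of the statement slightly more explicit.
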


\begin{proof}
The explicit expression of $\XX_0$ in spherical coordinates is the special case 
$\lambda_1=0$ of equation \eqref{system spherical coordinates} in Appendix~\ref{AppendixTransversality}.
The equation for  the angular coordinate $\varphi$ in the plane $(x_1,x_2,0,0)$ is
$\dot{\varphi}=1$.
Since this plane is perpendicular to $Fix(\textbf{SO(2)}(\gamma_1))$, where the connection  $[\vv \rightarrow \ww]$ lies, trajectories  must turn around the connection in the same direction.
\end{proof}

 The property in Proposition~\ref{orientation} is persistent under \emph{isotopies}: if it holds for the organising centre $\lambda_1=\lambda_2=0$, then it is still valid in smooth one-parameter families containing it, as long as there is still a connection.
 In particular, Property (P8) of \cite{LR} is verified and thus their results may be applied to the present work.

\subsection{Breaking the two-dimensional heteroclinic connection}
For $\lambda_1\ne 0$, the vector field $\XX_1(X)=\XX(X,\lambda_1,0)$ 
is no longer $\textbf{SO(2)}\oplus\ZZ_2(\gamma_2)$ but  is still equivariant under the action of $\gamma_1 \in \textbf{SO(2)}$.  This breaks the two-dimensional connection 
$[\ww\to \vv]$ into a transverse intersection of invariant manifolds.

\begin{theorem}
\label{TeoremaRede}
The vector field $\XX_1(X)=\XX(X,\lambda_1,0)$ on $\EU^3$ has symmetry group  
$\ZZ_2(\gamma_1)$ and 
for small $\lambda_1\ne 0$ {its flow}
has a heteroclinic network $\Sigma^\star$ involving the two equilibria $\vv$ and $\ww$ with the following properties:
\begin{enumerate}\item\label{OneDConnection}
there are two one-dimensional heteroclinic connections from $\vv$ to $\ww$ inside $Fix(\ZZ_2(\gamma_1)) \cap \EU^3$;
\item\label{homoclinic} there are no homoclinic connections to the equilibria;
\item
 the two-dimensional invariant manifolds of  $\vv$ and $\ww$ intersect  transversely  along one-dimensional connections from  $\ww$ to $\vv$.
\end{enumerate}
The connections  from $\vv$ to $\ww$ persist under small perturbations that preserve the $\gamma_1$--symmetry.
\end{theorem}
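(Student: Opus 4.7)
The plan is to treat the four claims in the order stated: symmetry reduction, existence of the two $[\vv\to\ww]$ connections, absence of homoclinics, and transversality of the two-dimensional manifolds, ending with persistence.

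First I would verify the symmetry assertion by direct substitution in \eqref{example}. Setting $\lambda_2=0$ and applying $\gamma_1:(x_1,x_2,x_3,x_4)\mapsto(-x_1,-x_2,x_3,x_4)$, each monomial in the right-hand side is either odd in $(x_1,x_2)$ in the first two components and even in the last two, or the reverse; in particular the term $\lambda_1 x_1x_2x_4$ in $\dot x_3$ and $\lambda_1 x_1x_2x_3$ in $\dot x_4$ are invariant. A case-by-case inspection (carried out in Appendix~\ref{AppendixTable}) shows that no other element of $\textbf{SO(2)}\oplus\ZZ_2(\gamma_2)$ commutes with $\XX_1$, so the symmetry group is exactly $\ZZ_2(\gamma_1)$.

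Next, for claim~\eqref{OneDConnection}, the flow-invariant set $Fix(\ZZ_2(\gamma_1))\cap\EU^3=\{x_1=x_2=0,\ x_3^2+x_4^2=1\}$ is a circle on which both $\lambda_1$-terms vanish identically. Parametrising by $x_3=\sin\theta$, $x_4=\cos\theta$ and using $r^2=1$, the restricted dynamics reduces to
\begin{equation*}
\dot\theta=\sin\theta\,(\alpha_1+\alpha_2\cos\theta).
\end{equation*}
Because $\alpha_2<0<\alpha_1$ and $\alpha_1+\alpha_2>0$, we have $\alpha_1+\alpha_2\cos\theta\ge\alpha_1+\alpha_2>0$ for every $\theta$, so the only equilibria on this circle are $\vv$ at $\theta=0$ and $\ww$ at $\theta=\pi$, and the sign of $\dot\theta$ is that of $\sin\theta$. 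Hence each of the two open semicircles is a single trajectory from $\vv$ to $\ww$, giving the two $[\vv\to\ww]$ connections inside $Fix(\ZZ_2(\gamma_1))\cap\EU^3$. Claim~\eqref{homoclinic} then follows immediately: since $\vv$ and $\ww$ are saddle-foci, their one-dimensional invariant manifolds are tangent to the $x_4$-axis at each saddle, hence forced to lie in $Fix(\ZZ_2(\gamma_1))\cap\EU^3$, and the circle dynamics just described shows that $W^u(\vv)$ terminates at $\ww$ and $W^s(\ww)$ originates at $\vv$, ruling out any homoclinic loop.

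The main obstacle is claim~(3), the transverse intersection of the two-dimensional manifolds. Here the symmetry forcing that pinned them together at $\lambda_1=0$ (the $\gamma_2$-invariant two-sphere) is gone, so one has to detect the splitting by a Melnikov-type calculation. The strategy I would follow is to write the equations in spherical coordinates adapted to the saddle-foci (the change of variables of Appendix~\ref{AppendixTransversality}), so that the broken $\ZZ_2(\gamma_2)$ symmetry becomes a transverse coordinate $\eta$, and the connection for $\lambda_1=0$ appears as the graph $\{\eta=0\}$. Expanding $W^u(\ww)$ and $W^s(\vv)$ as $\eta=\lambda_1 M(\cdot)+O(\lambda_1^2)$ along the unperturbed connection, transversality of their intersection is equivalent to the Melnikov function $M$ having a simple zero. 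The hardest step is to show that $M\not\equiv 0$ and that it has a simple zero; this exploits in an essential way the specific polynomial form of the symmetry-breaking terms $\lambda_1 x_1x_2x_4$ and $-\lambda_1 x_1x_2x_3$ and is the content of Appendix~\ref{AppendixTransversality}. Finally, persistence under $\gamma_1$-preserving perturbations is immediate: any such perturbation leaves $Fix(\ZZ_2(\gamma_1))\cap\EU^3$ flow-invariant, and the equilibria on the circle are hyperbolic within it, so the two connections survive by structural stability on this one-dimensional invariant set.
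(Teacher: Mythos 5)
Your proposal is correct and follows essentially the same route as the paper: $\gamma_1$--equivariance by inspection, flow-invariance of the circle $Fix(\ZZ_2(\gamma_1))\cap\EU^3$ giving the two $[\vv\to\ww]$ connections and (via hyperbolicity and the dimension count $\dim W^u(\vv)=\dim W^s(\ww)=1$) the absence of homoclinic loops, the Melnikov argument of Appendix~\ref{AppendixTransversality} for the transverse intersection, and source--sink robustness on the invariant fixed-point subspace for persistence. Your explicit reduction $\dot\theta=\sin\theta\,(\alpha_1+\alpha_2\cos\theta)$ is a welcome detail the paper leaves implicit; the only slip is that the one-dimensional invariant manifolds are tangent at $\vv$ and $\ww$ to the $x_3$--direction (the tangent to the circle), not to the $x_4$--axis, which does not affect the argument.
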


The new network $\Sigma^\star$ is qualitatively different from $\Sigma^0$. The { transversality}
of {the unstable manifold of $\ww$}, 
$W^u(\ww)$, and {the stable manifold of $\vv$}, $W^s(\vv)$, 
and the persistence of the heteroclinic connections from $\vv$ to $\ww$ give rise to a \emph{Bykov cycle}.
In {a} non-symmetric context, Bykov cycles arise as  bifurcations of codimension 2, the points in parameter space where they appear are called  \emph{T-points} in \cite{GS2} --- see Figure \ref{Bykov}.

 The existence of a Bykov cycle implies the existence of a bigger network: beyond the original transverse connections, there { are}
 infinitely many subsidiary heteroclinic connections turning around the original Bykov cycle,
 { that} 
 {have the same bifurcation structure in their unfolding. Multi-pulses are also expected.}

\begin{figure}
\begin{center}
\includegraphics[height=4cm]{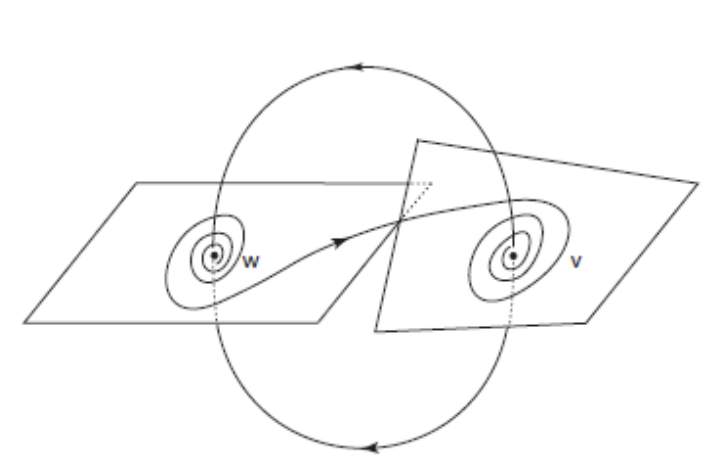}
\end{center}
\caption{\small Bykov cycle $\Sigma^\star$: heteroclinic cycle associated to two saddle-foci of different Morse indices, in which the one-dimensional invariant manifolds coincide and the two-dimensional invariant manifolds intersect  transversely .}
\label{Bykov}
\end{figure}

\begin{proof}[Proof of Theorem \ref{TeoremaRede}]
Clearly the perturbation term breaks all the symmetries of $\XX_1(X)$ except $\gamma_1$.
It is also immediate that $\gamma_1(\vv)=\vv$, $\gamma_1(\ww)=\ww$.
The circle $Fix(\ZZ_2(\gamma_1)) \cap \EU^3$ is still flow-invariant and contains no other equilibria, thus it still consists of the two equilibria  $\vv$ and $\ww$ and  the two connections from  $\vv$ to $\ww$. The plane $Fix(\ZZ_2(\gamma_1))$ will remain invariant under any perturbation that preserves the symmetry. Since on this plane $\vv$ is a saddle and $\ww$ is a sink, the connection persists under small perturbations. 

{Since $\vv$ and $\ww$ are hyperbolic, $Fix(\ZZ_2(\gamma_1))\supset[\vv \rightarrow \ww]$ and $\dim W^u(\vv)=\dim W^s(\ww) = 1$, item (\ref{homoclinic}) follows.} Breaking the $\ZZ_2(\gamma_2)$--equivariance is necessary for the existence of transverse intersection of the manifolds  $W^u(\ww)$ and $W^s(\vv)$. Nevertheless, the manifolds could intersect non  transversely .
The proof of transversality of the intersection of the two-dimensional manifolds, using the Melnikov method is similar to that given in Aguiar \emph{et al} \cite{ACL BIF CHAOS}. For completeness, we give the proof in Appendix~\ref{AppendixTransversality}.
\end{proof}

{Hereafter, our main goal is the description of the spiralling set that {
appears} in the flow of $\XX_1$. 
{First, we}
establish some results and terminology about the local and global dynamics. }
We follow {closely the general results in} \cite{ALR, LR, Rodrigues3}  {this is why proofs are short. More details are given in these references.}

\subsubsection{Local and global dynamics}\label{subLocalGlobal}
 By Theorem \ref{TeoremaRede}, the unfolding $\XX_1=\XX(X, \lambda_1, 0): \EU^3 \rightarrow \mathbf{T}\EU^3$ of $\XX_0$  is a family of $C^1$ vector fields such that for $\lambda_1 \neq 0$, the local two-dimensional manifolds $W^u(\ww)$ and $W^s(\vv)$ intersect  transversely  at (at least) two  trajectories. In order to describe the dynamics around the Bykov cycles, we introduce local coordinates near the saddle-foci $\vv$ and $\ww$ and we define some terminology.

Since $C_\vv \neq E_\vv$ and $C_\ww \neq E_\ww$, then by Samovol~\cite{Samovol}, the vector field $\XX_1$ is $C^1$--conjugate to its linear part around each saddle-focus. Linearisation may fail under resonance conditions that correspond to curves in the
 $(\alpha_1,\alpha_2)$--plane. This restriction has zero Lebesgue measure, and thus it does not place serious constraint on our analysis. In cylindrical coordinates $(\rho ,\theta ,z)$ the linear{isa}tion at $\vv$ takes the form:
$$
 \dot{\rho}=-C_{\vv }\rho \qquad  \dot{\theta}=1  \qquad  \dot{z}=E_{\vv }z
$$
and around $\ww$ it is given by:
$$
\dot{\rho}=E_{\ww }\rho \qquad \dot{\theta}= 1  \qquad \dot{z}=-C_{\ww }z .
$$
After a linear rescaling of the local variables, we  consider cylindrical neighbourhoods  $V\subset {\EU}^3$ and $W\subset {\EU}^3$  of $\vv $ and $\ww $, of radius $1$ and height $2$, respectively. We suggest that the reader follows this section observing Figure \ref{elipse}.

\begin{figure}[h]
\begin{center}
\includegraphics[height=7cm]{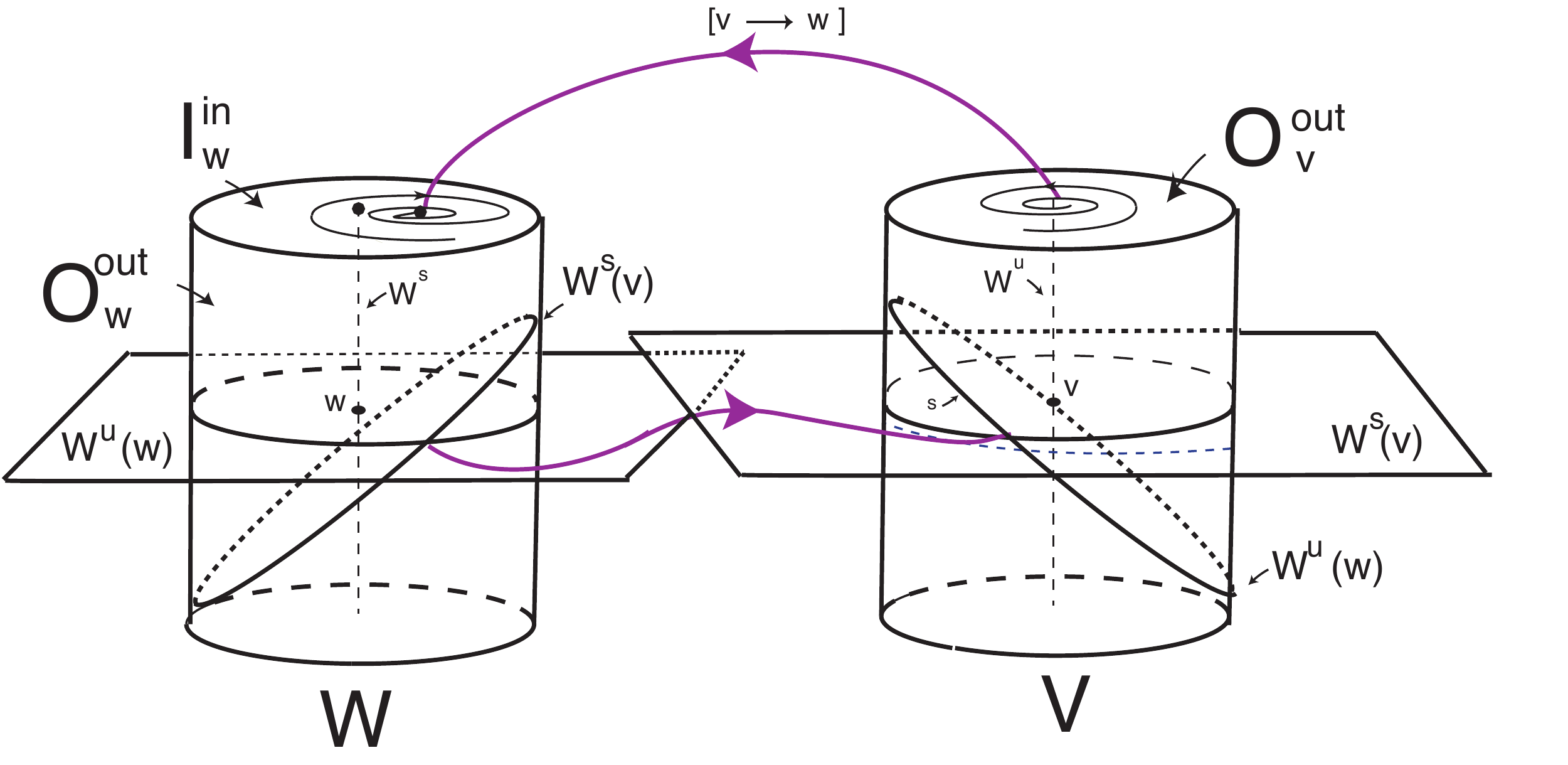}
\end{center}
\caption{\small 
Conventions in the cylindrical  neighbourhoods of $\vv$ and $\ww$. Both $W^s_{loc}(\vv) \cap O^{out}_\ww$ and $W^u_{loc}(\ww) \cap O^{in}_\vv$ are closed curves in  the boundaries of the cylinders for small values of $\lambda_1\ne 0$.}
\label{elipse}
\end{figure}

The boundary of each cylinder forms an \emph{isolating block}. 
 {It consists}
of three components: the cylinder wall parametr{ise}d by $x\in \RR\pmod{2\pi}$ and $|y|\leq 1$ with the usual cover $ (x,y)\mapsto (1 ,x,y)=(\rho ,\theta ,z)$ and two discs{, the top and bottom of the cylinder. 
Choosing coordinates so that one of the  connections $[\vv\rightarrow\ww]$  joins the tops of the two cylinders, we may from now on restrict our attention to the upper part of the walls $y\ge 0$ and the tops of the cylinders,  behaviour at the bottom is obtained by symmetry.
}
We take polar coverings of  
{the top discs $(r,\varphi )\mapsto (r,\varphi , 1)=(\rho ,\theta ,z)$ whith $0\leq r\leq 1$ with $\varphi \in \RR\pmod{2\pi}$
}
and let:
\begin{itemize}
\item $O_\vv^{out}$ be the  {top}
 of the cylinder $V$, where the flow goes out of $V$; 
\item $I_\vv^{in}$ be the  {upper part of the} wall of the cylinder $V$, where the flow goes in $V$;
\item $O_\ww^{out}$ be the {upper part of the}  wall of the cylinder $W$, where the flow goes out of  $W$;
\item $I_\ww^{in}$ be the {top} 
 of the cylinder $W$, where the flow goes in $W$.
\end{itemize}
The trajectories of  all points $(x,y)$ in $I_\vv^{in} \backslash W^s(\vv)$, leave $V$ at $O_\vv^{out}$ at
 {
\begin{equation}
\Phi_{\vv }(x,y)=\left(y^{\delta_\vv},-\frac{\ln y}{E_\vv}+x\right)=(r,\phi)
\qquad \mbox{where}\qquad 
\delta_\vv=\frac{C_{\vv }}{E_{\vv}}  \  .
\label{local_v}
\end{equation}
}
Similarly, points $(r,\phi)$ in $I_\ww^{in} \backslash W^s(\ww)$, leave $W$ at $O_\ww^{out}$ at
\begin{equation}
\Phi_{\ww }(r,\varphi )=\left(-\frac{\ln r}{E_\ww}+\varphi,r^{\delta_\ww}\right)=(x,y)
\qquad \mbox{where}\qquad 
\delta_\ww=\frac{C_{\ww }}{E_{\ww}} \ .
\label{local_w}
\end{equation}

The  transition between cylinders 
 {follows each connection  in a flow-box.}
The flow sends points in $O_\vv^{out}$ near $W^u_{loc}(\vv)$ into $I_\ww^{in}$ along the connection $[\vv\rightarrow\ww]$
 { defining a transition map $\Psi_{\vv \rightarrow \ww}:O_\vv^{out}\rightarrow I_\ww^{in}$
 that we may assume to be the identity.}
There is also a well defined transition map $ \Psi_{\ww \rightarrow \vv}:O_\ww^{out} \longrightarrow I_\vv^{in}$ that can be  {taken to be}
a rotation \cite{LR}.
 {We  choose our coordinates to have the connection $[\ww \rightarrow \vv]$ meeting $I_\vv^{in}$ and  $O_\ww^{out}$ at $(x,y)=(0,0)$.
 }

 { Let}
 $\eta:=  \Phi_{\ww }\circ \Psi_{\vv \rightarrow \ww} \circ \Phi_{\vv }$
  {and}
 define the first return map to $I_\vv^{in}$ as $ \Psi:=\Psi_{\ww \rightarrow \vv} \circ \eta$  at all points where it is well defined.

 {The geometry of the local transition maps is described using the following terminology:}
a \emph{ segment }$\beta $ 
on $I_\vv^{in}$ is a smooth regular parametr{ise}d curve 
$\beta :[0,1)\rightarrow I_\vv^{in}$ that meets $W^{s}_{loc}(\vv )$ transversely at the point $\beta (1)$ only and such that, writing $\beta (s)=(x(s),y(s))$, both $x$ and $y$ are monotonic functions of $s$.

A \emph{ spiral } on a disc $D$  around a point $p\in D$ is a curve 
$\alpha :[0,1)\rightarrow D$
satisfying $\dpt \lim_{s\to 1^-}\alpha (s)=p$ and such that if
$\alpha (s)=(r(s),\theta(s))$ is its expressions in
polar coordinates around $p$ then the maps $r$ and $\theta$ are monotonic, and 
$\lim_{s\to 1^-}|\theta(s)|=+\infty$.

Consider a cylinder $C$ parametr{ise}d by a covering $(\theta,h )\in  \RR\times[a,b]$,
with $a<b\in\RR$ where $\theta $ is periodic.
A \emph{helix} on the cylinder $C$ 
\emph{accumulating on the circle} 
$h=h_{0}$ is a curve
$\alpha :[0,1)\rightarrow C$
such that its coordinates $(\theta (s),h(s))$ 
satisfy $ \lim_{s\to 1^-}h(s)=h_{0}$, $\lim_{s\to 1^-}|\theta (s)|=+\infty$ and the maps $\theta$ and $h$ are monotonic.
Using these definitions and the expressions (\ref{local_v}) and (\ref{local_w}) for $\Phi_{\vv }$ and $\Phi_{\ww }$  we get:

\begin{proposition}[Aguiar \emph{et al} \cite{ALR}, 2010]
\label{Structures}
A segment on $I_\vv^{in}$ is mapped by $\Phi _{\vv}$ into 
 a spiral on $O_\ww^{out}$ around $W^u_{loc}(\vv)\cap Out(\vv) $.
 This spiral is mapped by $ \Psi_{\vv \rightarrow \ww}$  into another spiral around $W^s_{loc}(\ww)\cap I_\vv^{in}$,
 which is mapped by $\Phi _{\ww}$ into a helix on $O_\ww^{out}$ accumulating on the circle  $O_\ww^{out} \cap W^{u}(\ww)$.
\end{proposition}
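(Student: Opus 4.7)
The proof is a direct computation using the explicit normal forms \eqref{local_v} and \eqref{local_w} together with the definitions of segment, spiral and helix. The plan is to track a parametrisation $s\in[0,1)$ along the curve at each stage, showing that two of the three geometric conditions (monotonicity of a radial or height coordinate, decay to the limit set) are immediate, while the third (unbounded monotonic winding of the angle) comes from the logarithmic term in $\Phi_\vv$ and $\Phi_\ww$.

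\textbf{Step 1: Segment to spiral.} Write the segment as $\beta(s)=(x(s),y(s))$ with $x,y$ monotonic and $y(s)\to 0^+$ as $s\to 1^-$. Applying \eqref{local_v},
\[
\Phi_\vv(\beta(s))=\Bigl(y(s)^{\delta_\vv},\,-\tfrac{\ln y(s)}{E_\vv}+x(s)\Bigr)=(r(s),\phi(s)).
\]
Since $\delta_\vv>0$ and $y(s)$ is monotonic with $y(s)\to 0$, the radial coordinate $r(s)=y(s)^{\delta_\vv}$ is monotonic and tends to $0$. For the angle, $-\ln y(s)/E_\vv$ is monotonic and diverges to $+\infty$, while $x(s)$ is bounded, so $\phi(s)$ is eventually monotonic with $|\phi(s)|\to+\infty$. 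This matches the definition of a spiral around $W^u_{loc}(\vv)\cap O_\vv^{out}$.

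\textbf{Step 2: Spiral to spiral across the flow box.} Since $\Psi_{\vv\to\ww}$ is a diffeomorphism defined by a flow-box around $[\vv\rightarrow\ww]$ (which we may take to be the identity by the choice of coordinates), the image is a smooth curve with the same qualitative behaviour: monotonic radial coordinate decaying to the puncture $W^s_{loc}(\ww)\cap I_\ww^{in}$, monotonic angle with unbounded variation. Hence it is again a spiral.

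\textbf{Step 3: Spiral to helix.} Parametrise the spiral on $I_\ww^{in}$ by $(r_1(s),\varphi_1(s))$ with $r_1(s)\to 0$, $|\varphi_1(s)|\to\infty$, both monotonic. Applying \eqref{local_w},
\[
\Phi_\ww(r_1(s),\varphi_1(s))=\Bigl(-\tfrac{\ln r_1(s)}{E_\ww}+\varphi_1(s),\,r_1(s)^{\delta_\ww}\Bigr)=(x(s),y(s)).
\]
Since $\delta_\ww>0$, $y(s)\to 0$ monotonically, which means the curve accumulates on the circle $O_\ww^{out}\cap W^u(\ww)$. For the angular coordinate, the key observation is that $r_1$ arose in Step~1 as $r_1(s)=y_0(s)^{\delta_\vv}$ where $y_0(s)$ denotes the original segment's $y$-coordinate; thus $-\ln r_1(s)/E_\ww=-(\delta_\vv/E_\ww)\ln y_0(s)$ and $\varphi_1(s)=-\ln y_0(s)/E_\vv+x_0(s)$ have \emph{the same sign of derivative} (both driven by $-\ln y_0(s)$ with positive coefficients). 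Hence their sum $x(s)$ is monotonic and $|x(s)|\to+\infty$, which is exactly the helix condition.

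The main technical point to flag is the monotonicity of the angular coordinate in Step~3: it could \emph{a priori} fail if the two divergent logarithmic contributions had opposite signs and cancelled. The coherence of signs is not an accident but a direct consequence of the fact that both contributions are pushed by the same shrinking factor $y_0(s)\to 0$ through positive rates $\delta_\vv/E_\ww$ and $1/E_\vv$; this is where the hypothesis $C_\vv,E_\vv,C_\ww,E_\ww>0$ enters. All other steps reduce to the algebraic properties of monotonic functions and the explicit power-logarithmic form of the Dulac maps, so no further estimates are needed.
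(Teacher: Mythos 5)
Your proof is correct and follows exactly the route the paper intends: a direct computation with the Dulac-type local maps \eqref{local_v} and \eqref{local_w} checked against the definitions of segment, spiral and helix (the paper itself only cites \cite{ALR}, prefacing the statement with ``Using these definitions and the expressions \ldots we get''). Your Step~3 observation that the two logarithmic contributions to the angular coordinate combine with the positive coefficient $(C_\vv+E_\ww)/(E_\vv E_\ww)$ is precisely the constant $K$ that reappears in the proof of Theorem~\ref{TeoremaSimetrico}, and you have also correctly read the statement's $W^s_{loc}(\ww)\cap I_\vv^{in}$ as a typo for $W^s_{loc}(\ww)\cap I_\ww^{in}$.
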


\subsubsection{The spiralling set}
 {
 The study of ``routes to chaos'' has been a recurring concern in the research  on nonlinear dynamics during the last decades. 
 Several patterns have been described for  qualitative changes in features of vector fields that vary under small changes of a one-dimensional parameter. 
 In the present research, when $\lambda_1$ moves away from zero, we observe a 
 phenomenon called \emph{instant chaos} in different contexts \cite{Dawes, GWo, LD}: 
 an instantaneous jump from a  regular flow near $\Sigma^\star$ to chaotic behaviour, with suspended horseshoes and homoclinic classes.
 }

\begin{theorem}\label{TeoremaSimetrico}
For generic $\alpha_1 \neq \alpha_2$ and for any small $\lambda_1\ne 0$, the vector field $\XX_1=\XX(X,\lambda_1,0)$ is $\ZZ_2(\gamma_1)$--equivariant and {its flow} has a compact spiralling set $\Lambda\subset\EU^3$ containing the  {heteroclinic network $\Sigma^\star$ of Theorem~\ref{TeoremaRede},
involving the saddle-foci $\vv$ and $\ww$ with a transverse intersection of the two-dimensional invariant manifolds; and moreover the suspension of a compact set $\mathcal{H}=\bigcup_{i \in \mathbf{Z}} \mathcal{H}_i\subset I_\vv^{in}$, where $\{\mathcal{H}_i\}_{i \in \mathbf{Z}}$ is an increasing sequence of invariant sets, accumulating on $\Sigma^\star$.
The dynamics of the first return map $\Psi$ to $\mathcal{H}$ is uniformly hyperbolic and topologically conjugate to the suspension of a full shift over an infinite set of symbols.
}
\end{theorem}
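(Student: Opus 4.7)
The plan is to realise the spiralling set $\Lambda$ as the suspension of a bi-infinite symbolic horseshoe for the first return map $\Psi$ on $I_\vv^{in}$, built from the local normal forms (\ref{local_v})--(\ref{local_w}) and from the geometric picture of Proposition~\ref{Structures}.

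\emph{Equivariance and the Bykov cycles.} The $\ZZ_2(\gamma_1)$--equivariance of $\XX_1$ is immediate from the construction detailed in Appendix~\ref{AppendixTable}: both $\XX_0$ and the term $\lambda_1 F_1$ are $\gamma_1$--equivariant and tangent to $\EU^3$, so the sphere remains flow-invariant. The heteroclinic network $\Sigma^\star$ together with the transverse intersection of $W^u(\ww)$ and $W^s(\vv)$ is given by Theorem~\ref{TeoremaRede}. The chirality of the flow — solutions wind in the same sense near $\vv$ and $\ww$ — is granted by Proposition~\ref{orientation} and persists under the $\gamma_1$--symmetric isotopy $\lambda_1\mapsto\XX(\cdot,\lambda_1,0)$, so condition (P8) of \cite{LR} is in force for the whole family.

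\emph{The horseshoe strips.} I fix a fundamental domain $D\subset I_\vv^{in}$ for the local contraction of $\Phi_\vv$, namely an annular rectangle bounded by two circles $y=y_0$ and $y=y_0 e^{-2\pi/E_\vv}$ and two radial segments transverse to $W^s_{loc}(\vv)$. By Proposition~\ref{Structures}, each radial segment of $D$ is mapped by $\eta=\Phi_\ww\circ\Psi_{\vv\to\ww}\circ\Phi_\vv$ to a helix on $O_\ww^{out}$ accumulating on the circle $O_\ww^{out}\cap W^u(\ww)$. The transversality of $W^u(\ww)$ and $W^s(\vv)$ then guarantees, via the rotation $\Psi_{\ww\to\vv}$, that this helix is carried to a curve crossing $D$ transversely twice per turn. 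Labelling the successive turns by $i\in\ZZ$ yields a bi-infinite family of topological rectangles $R_i\subset D$, each of which is mapped by $\Psi$ across $D$ in the Markov manner. Setting $\mathcal{H}_i$ to be the maximal $\Psi$--invariant set in $\bigcup_{|j|\le i} R_j$ produces a nested increasing sequence accumulating on $\Sigma^\star$, and $\Lambda$ is defined as the suspension of $\mathcal{H}=\bigcup_i\mathcal{H}_i$ together with $\Sigma^\star$.

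\emph{Hyperbolicity and shift conjugacy.} On each $R_i$, the derivative $D\Psi$ is computed from (\ref{local_v})--(\ref{local_w}) composed with the (affine) global transitions. For $\delta_\vv,\delta_\ww\neq 1$ — a generic condition on $(\alpha_1,\alpha_2)$ — the strong radial expansion introduced by $\Phi_\ww$ dominates both the contraction inherited from $\Phi_\vv$ and the bounded distortion of $\Psi_{\ww\to\vv}$, producing invariant stable and unstable cone fields with uniform rates on each $R_i$. This gives uniform hyperbolicity of $\Psi|_{\mathcal{H}_i}$, and the standard symbolic dynamics argument as in \cite{ALR} yields a topological conjugacy between $\Psi|_{\mathcal{H}_i}$ and the full shift over $2i+1$ symbols. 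Taking the direct limit as $i\to+\infty$ produces the announced conjugacy of $\Psi|_\mathcal{H}$ with the full shift on an infinite alphabet, whose suspension is $\Lambda\setminus\Sigma^\star$.

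\emph{Main obstacle.} The principal technical difficulty is keeping the hyperbolicity constants uniform as $i\to\pm\infty$, equivalently as the rectangles $R_i$ approach $W^s(\vv)$ where $D\Psi$ becomes singular. This is addressed by exploiting the explicit power-law form of $\Phi_\vv$: the expansion and contraction rates blow up like powers of $y^{-1}$, fast enough to absorb any bounded angular distortion coming from the rotation $\Psi_{\ww\to\vv}$. The Markov covering property for the strips then follows from the quantitative transverse crossing of the helix with $D$ that is proved in Appendix~\ref{AppendixTransversality}.
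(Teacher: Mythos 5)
Your construction is essentially the paper's own: both arguments run a segment transverse to $W^s_{loc}(\vv)$ through $\Phi_\vv$, $\Psi_{\vv\to\ww}$ and $\Phi_\ww$, use Proposition~\ref{Structures} to turn it into a helix on $O_\ww^{out}$ accumulating on $W^u_{loc}(\ww)$, invoke the transversality from Theorem~\ref{TeoremaRede} to get infinitely many transverse crossings with $W^s_{loc}(\vv)\cap O_\ww^{out}$, and extract from these a bi-infinite family of Markov strips for the return map $\Psi$ whose maximal invariant sets give the increasing sequence $\mathcal{H}_i$. The only substantive divergence is at the hyperbolicity step, where you sketch an invariant-cone-field argument from the power-law normal forms \eqref{local_v}--\eqref{local_w}, while the paper quantifies the strips via Proposition~5 of \cite{Rodrigues3} and delegates hyperbolicity of the return map to \cite{ACL NONLINEARITY}; your route is more self-contained but the uniformity claim near $W^s(\vv)$ is exactly the point that those references make precise, so you are not avoiding the cited work so much as re-deriving it.

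Two small corrections. First, the paper obtains the \emph{compactness} of $\Lambda$ (and the fact that it sits inside an attractor) from the asymptotic stability of $\Sigma^0$: an arbitrarily small compact neighbourhood $\mathcal{N}$ of $\Sigma^0$ with the vector field transverse and inward-pointing on $\partial\mathcal{N}$ survives for small $\lambda_1$ and contains $\Sigma^\star$, hence is positively invariant for $\XX_1$. Your definition of $\Lambda$ as ``the suspension of $\mathcal{H}$ together with $\Sigma^\star$'' does not by itself deliver a compact set, since the strips accumulate on $W^s_{loc}(\vv)$ where $\Psi$ degenerates; you need this positively invariant neighbourhood (or a closure argument) to conclude. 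Second, Appendix~\ref{AppendixTransversality} proves only the transversality of the two-dimensional invariant manifolds via the Melnikov function; the quantitative statement about where the helix crosses the rectangle $R_\vv$ (the intervals $[e^{a_n},e^{b_n}]$) is not there but in \cite{Rodrigues3}, so your attribution of the Markov covering property to the appendix should be redirected.
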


\begin{proof}
 {The existence of the network $\Sigma^\star$ follows from Theorem \ref{TeoremaRede}.}
For $\lambda_1=0$, the heteroclinic network {$\Sigma^0$} for $\XX_0$ is asymptotically stable. 
In particular, there are arbitrarily small compact neighbourhoods of {$\Sigma^0$} such that the vector field is transverse to their boundaries where it points inwards. 
Let $\mathcal{N}$ be one of these neighbourhoods.  There exists $\lambda_\star>0$ such that for $\lambda_1 < \lambda_\star$, the vector field $\XX_1$ is still transverse to the boundary of $\mathcal{N}$ and $\Sigma^\star\subset \mathcal{N}$. 
Thus $\mathcal{N}$ is  a compact set that is positively invariant under the flow of $\XX_1$. Hence $\mathcal{N}$ contains an attractor.

We use the ideas of \cite{ACL NONLINEARITY, ALR, Rodrigues3} adapted to our purposes to show the existence of the suspended horseshoe  accumulating on the heteroclinic network (see Figure \ref{horseshoe}). We start by taking {cylindrical neighbourhoods $V$ and $W$ of each equilibrium as in \ref{subLocalGlobal}.}

\begin{figure}
\begin{center}
\includegraphics[width=10cm]{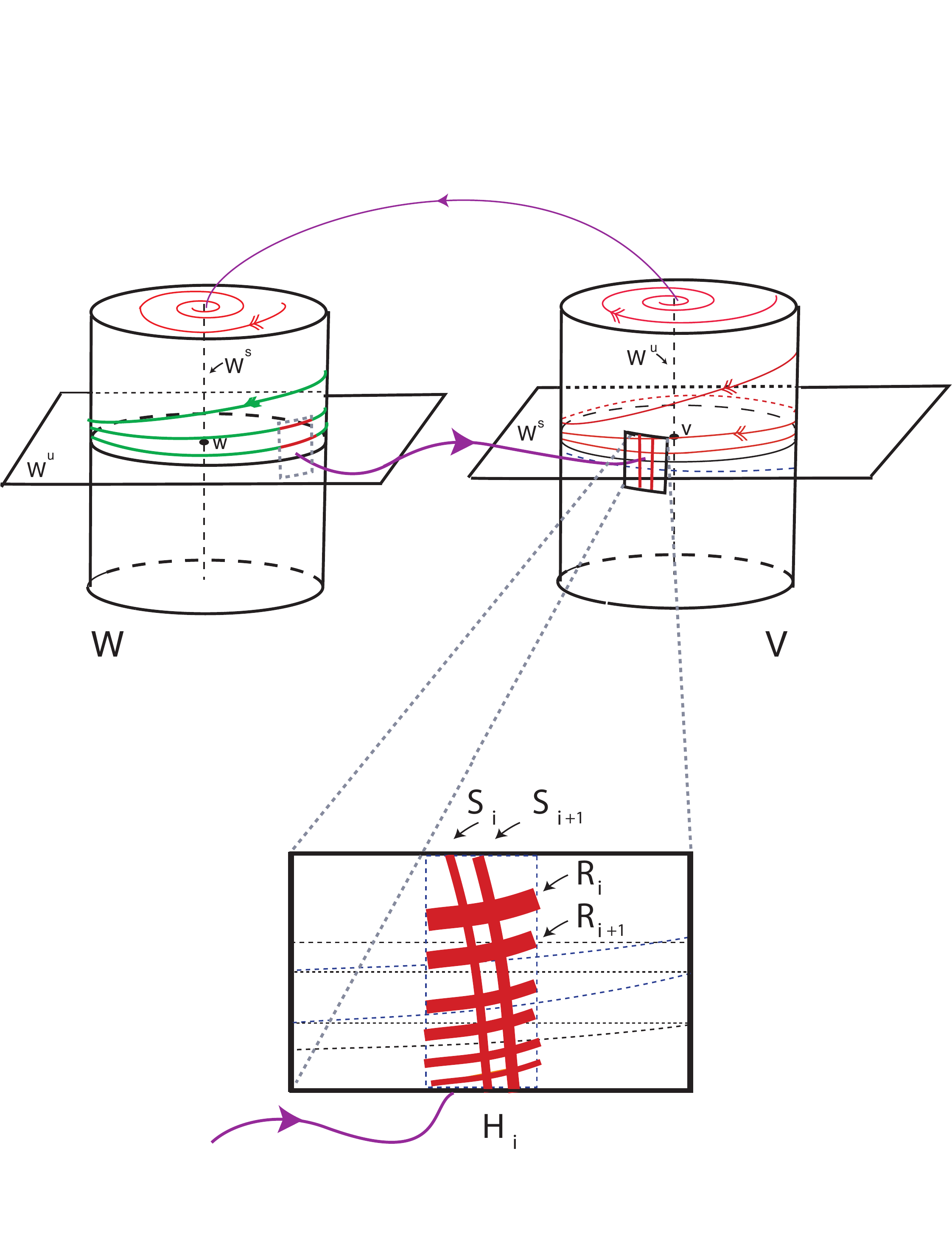}
\end{center}
\caption{\small Construction of the suspended horseshoe $\mathcal{H}_i$ for $\XX_1$ near the cycle, {giving rise} to the Cantor set on the wall of the cylinder, generated by an arbitrarily {large number of strips}. }
\label{horseshoe}
\end{figure}

{For small $\varepsilon >0$ and $\tau \in \left(0,1\right]$, consider the rectangles $R_\vv\subset I_\vv^{in}$ and $R_\ww\subset O_\ww^{out}$ parametrised by   $(x,y)\in[-\varepsilon, \varepsilon] \times [0,\tau]$.
}
Let $\beta$ be a vertical segment in $R_\vv$ whose angular component is  {$x_0 \in(-\varepsilon, \varepsilon)$,}
lying across the {stable manifold} of $\vv$. 
By Proposition \ref{Structures},  {its image by $\Phi _{\vv}$} accumulates as a spiral on the unstable manifold of $\vv$ 
 {that is then mapped by  $\Psi_{\vv \rightarrow \ww}$ into another spiral accumulating} on the stable manifold  {of} 
 $\ww$ --- see Figure \ref{horseshoe}. This spiral of initial conditions on {$I_\ww^{in}$}
 is mapped, by the local map near $\ww$, into points lying on a helix  {in $O_\ww^{out}$ that accumulates on $W^u_{loc}(\ww)\cap O_\ww^{out}$.
 On the other hand, the curve $W^s_{loc}(\vv)\cap O_\ww^{out}$ is a segment in $O_\ww^{out}$, hence the helix crosses it transversely infinitely many times.} 
%
The composition of consecutive local maps and transition functions transforms the original segment $\beta$ of initial conditions lying across  $W^s(\vv)$ into 
infinitely many segments in   $R_\vv$ with the same property. 

Proposition 5 of \cite{Rodrigues3} shows that there are $n_0 \in \textbf{N}$ and a family of intervals 
$(\mathcal{I}_n)_{n \geq n_0}= ([e^{a_n}, e^{b_n}])_{n \geq n_0} $, where:
$$
a_n= K^{-1} (-\varepsilon-2n\pi +x_0),  \quad \quad b_n=K^{-1} ( \varepsilon-2n\pi +x_0)\quad  \text{     and    }\quad  K= \frac{C_\vv+E_\ww}{E_\vv E_\ww} > 0,$$
such that
 {$\beta([e^{a_n},e^{b_n}])  \subset I_\vv^{in}$ and $\eta\circ\beta([e^{a_n}, e^{b_n}])\subset R_\ww$. }
Since $a_n$ and $b_n$ depend smoothly on the angular coordinate  {$x_0$} of the vertical segment $\beta$, then the sequence of horizontal strips:
$$
\mathcal{R}_n = [-\varepsilon, \varepsilon] \times  
 {\beta([e^{a_n(x_0)}, e^{b_n(x_0)}]) }
\subset R_\vv,  \quad n \geq n_0 \in \textbf{N}, \quad x \in[-\varepsilon, \varepsilon]
$$
 {is} mapped by $\eta$ onto $R_\ww$. The image under $\eta$ of the endpoints of the horizontal boundaries of $\mathcal{R}_n$ must join the end points of $\eta([e^{a_n}, e^{b_n}])\subset R_\ww$; since the map $\Psi_{\ww \rightarrow \vv}$ is assumed to be a rotation, for each $n > n_0$, the horizontal strip $\mathcal{R}_n$ is mapped by $\Psi$ into a vertical strip $\mathcal{S}_n$ across $R_\vv \subset I_\vv^{in}$.

 Moreover, the set 
$
\bigcap_{k \in \ZZ} \left(\bigcup_{i=1}^{k} \Psi^k (\mathcal{R}_i)\right)
$
is a Cantor set of initial conditions where the return map to $\bigcup_{i \in \textbf{N}} \mathcal{R}_i$ is well defined  {both} in forward and  {in} backward time, for arbitrarily large times. The dynamics of $\Psi$ restricted to the above invariant set is  {semi-conjugate}
to a full shift over an infinite alphabet that represents the paths in  {$\Sigma^\star$.}
 {
In \cite{ACL NONLINEARITY}, it is shown that that if $\Pi$ is a transverse section to the suspended horseshoe $\mathcal{H}$, the first return map to $\Pi$ is hyperbolic at all points where it is  well defined.
}
\end{proof}
%
%
%

Points lying on the invariant manifolds of $\vv$ and $\ww$ are dense in  { the suspended horseshoe} 
$\mathcal{H}$. 
 {In particular, the \emph{topological entropy} of the corresponding flow is positive.}
 {The set $\mathcal{H}$}
might have positive Lebesgue measure like the ``fat Bowen horseshoe''. {Using the Conley-Moser Conditions,} Rodrigues \cite{Rodrigues3} proved that the shift dynamics does not trap most trajectories  {that remain in a} 
small neighbourhood of $\Sigma^\star$:
\begin{corollary}
\label{zero measure}
Let ${N}^{\Sigma^\star}$  be a tubular neighbourhood  of one of the Bykov cycles $\Sigma^\star$.
Then, in any cross-section $\Pi_q\subset N^{\Sigma^\star}$ at a point  $q$ in $[\ww\rightarrow\vv]$, the set of initial conditions in $\Pi_q \cap {N}^{\Sigma^\star}$ that do not leave ${N}^{\Sigma^\star}$ for all time has zero Lebesgue measure.
\end{corollary}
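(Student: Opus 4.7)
My strategy would be to reduce the statement to a Lebesgue measure estimate for the first return map $\Psi$ of Theorem \ref{TeoremaSimetrico}. A flow box around the connection $[\ww \rightarrow \vv]$ identifies $\Pi_q$, via a bi-Lipschitz diffeomorphism preserving null sets, with a subrectangle of $I_\vv^{in}$ that I may assume is the rectangle $R_\vv=[-\varepsilon,\varepsilon]\times[0,\tau]$ used in the horseshoe construction. Under this identification, a trajectory starting at $p\in\Pi_q$ remains in $N^{\Sigma^\star}$ for all forward time if and only if every forward iterate of $\Psi$ at $p$ is defined and lies in $\bigcup_{n\geq n_0}\mathcal{R}_n$: points in the complementary bands $R_\vv\setminus\bigcup_n\mathcal{R}_n$ correspond precisely to trajectories that exit $W$ through its lateral boundary before returning to $I_\vv^{in}$, and hence leave $N^{\Sigma^\star}$.

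The one-step measure estimate then follows from the explicit expressions for $a_n$ and $b_n$ in Theorem \ref{TeoremaSimetrico}:
\[
\sum_{n\geq n_0}\bigl(e^{b_n}-e^{a_n}\bigr) = e^{(\varepsilon+x_0)/K}\bigl(1-e^{-2\varepsilon/K}\bigr)\sum_{n\geq n_0}e^{-2n\pi/K}.
\]
Since $n_0$ is chosen so that $e^{b_{n_0}}\lesssim\tau$, a direct comparison with the height $\tau$ of $R_\vv$ shows that the ratio $\theta:=\bigl|\bigcup_n\mathcal{R}_n\bigr|/|R_\vv|$ is controlled, up to a lower-order correction, by $(1-e^{-2\varepsilon/K})/(1-e^{-2\pi/K})<1$ provided $\varepsilon<\pi$, a condition already present in the horseshoe construction. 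Hence at most a proportion $\theta<1$ of the initial conditions survive one return to $\Pi_q$.

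To iterate I would invoke the uniform hyperbolicity of $\Psi$ on $\bigcup_n\mathcal{R}_n$, already used in the proof of Theorem \ref{TeoremaSimetrico} through the Conley--Moser framework. For each admissible finite word $(m_0,\ldots,m_{k-1})$, the rectangle $\mathcal{R}_{m_0}\cap\Psi^{-1}(\mathcal{R}_{m_1})\cap\cdots\cap\Psi^{-(k-1)}(\mathcal{R}_{m_{k-1}})$ has vertical height comparable to that of $\mathcal{R}_{m_0}$ and horizontal width comparable to that of the pullback of $\mathcal{R}_{m_{k-1}}$; by bounded distortion its two-dimensional Lebesgue measure is proportional to their product. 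Summing over all admissible words of length $k$ and using the geometric decay above, the total measure of the $k$-step survivor set is $O(\theta^k)$. Since the set of non-escaping initial conditions is contained in the $k$-step survivor set for every $k\in\mathbb{N}$, its Lebesgue measure is zero. If ``for all time'' is read bilaterally, the backward iteration is handled either by the $\gamma_1$-symmetry or by the analogous local estimates at $\ww$; forward time alone already suffices.

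The main obstacle will be the infinite alphabet: one must verify that the Conley--Moser uniform distortion and splitting bounds, classically formulated for horseshoes with finitely many strips, persist uniformly across the countable family $\{\mathcal{R}_n\}_{n\geq n_0}$ whose heights shrink to zero. Concretely, the hyperbolicity constants of $D\Psi$ must be uniform in $n$, which can be extracted from the explicit formulas \eqref{local_v} and \eqref{local_w} for $\Phi_\vv$ and $\Phi_\ww$; then the rapid decay $e^{-2n\pi/K}$ dominates the growth of the number of length-$k$ admissible words, yielding the geometric estimate $O(\theta^k)$ that produces the null set.
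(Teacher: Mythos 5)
Your argument is essentially the paper's own: the paper offers no proof of this corollary, quoting it directly from Rodrigues \cite{Rodrigues3}, where it is established by exactly the Conley--Moser-type estimate you describe --- geometric decay of the strip heights $e^{b_n}-e^{a_n}$, a one-step escape proportion bounded away from zero, and iteration under uniform hyperbolicity with bounded distortion. Your explicit one-step ratio $\bigl(1-e^{-2\varepsilon/K}\bigr)/\bigl(1-e^{-2\pi/K}\bigr)<1$ and your flagging of the uniform-distortion issue across the countable family of strips are both consistent with that cited proof, so there is nothing substantive to correct.
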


\subsubsection{Heteroclinic Switching and Subsidiary Dynamics}
One astonishing property is the possibility of shadowing the heteroclinic network $\Sigma^\star$ by the property called \emph{heteroclinic switching}: any infinite sequence of pseudo-orbits defined by admissible heteroclinic connections  can be shadowed, as we proceed to define.

\begin{figure}
\begin{center}
\includegraphics[width=14cm]{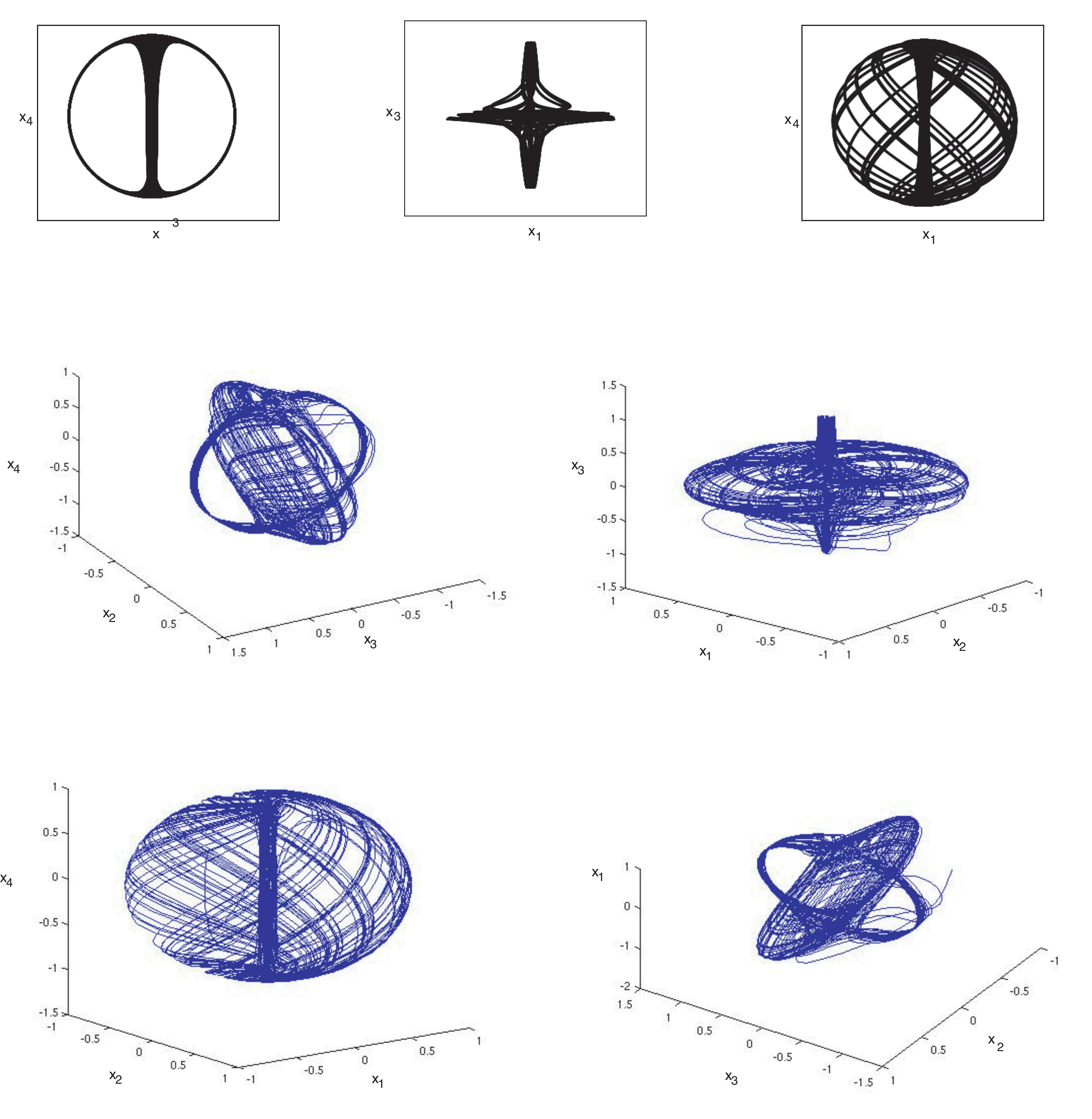}
\end{center}
\caption{\small The {structure of the } spiralling set $\Lambda$. Top: Projection in the $(x_3,x_4)$, $(x_1,x_3)$ and $(x_1,x_4)$--planes of the trajectory with initial condition $(-0.5000, -0.1390, -0.8807, 0.3013)$ for the flow of $\XX_1$, with $\alpha_1=1$, $\alpha_2=-0.1$ and $\lambda_1=0.05$. Centre and Bottom: Projection in the $(x_2, x_3,x_4)$, $(x_1,x_2, x_3)$, $(x_1,x_2, x_4)$ and $(x_1,x_3,x_4)$--hyperplanes of the trajectory with the same initial condition and parameters.}
\label{simulation1(geom)}
\end{figure}

\medbreak

 For the heteroclinic network $\Sigma^\star $ with node set $\{\vv, \ww\}$,
 a \emph{path of order k} \emph{\ }on $\Sigma^\star $ is a finite
sequence $s^k=(c_{j})_{j\in \{1,\ldots,k\}}$ of heteroclinic connections 
$c_{j}=[A_{j}\rightarrow B_{j}]$ in $\Sigma^\star $ such that 
 $A_{j}, B_{j} \in \{\vv, \ww\}$ and $B_{j}=A_{j+1}$. An infinite path corresponds to an infinite sequence of connections in $\Sigma^\star$. 
Let $N_{\Sigma^\star}$ be a neighbourhood of the network $\Sigma^\star$
and let $U_{A}$ be a neighbourhood of  {a}
node $A$. For each heteroclinic connection in $\Sigma^\star$, consider a point $p_i$ on it and a small neighbourhood $V_i$ of $p_i$. We assume that the neighbourhoods of the nodes are pairwise disjoint, as well for those of points in connections.
\medbreak
Given neighbourhoods as before, the trajectory $\varphi(t,q)$,  \emph{follows} the finite path
$s^k=(c_{j})_{j\in \{1,\ldots,k\}}$ of order $k$, if
there exist two monotonically increasing sequences of times 
$(t_{i})_{i\in \{1,\ldots,k+1\}}$ and $(z_{i})_{i\in \{1,\ldots,k\}}$
such that for all $i \in \{1,\ldots,k\}$, we have $t_{i}<z_{i}<t_{i+1}$ and:
\begin{enumerate}
\item
$\phi (t,q)\subset N_{\Sigma^\star}$ for all $t\in (t_{1},t_{k+1});$
\item
$\phi (t_{i},q) \in U_{A_{i}}$ and $\phi (z_{i},q)\in V_{i}$ and
\item
 for all $t\in (z_{i},z_{i+1})$, $\phi (t,q)$ does not visit the neighbourhood  of any other node except that of $A_{i+1}$.
\end{enumerate}
There is \emph{finite switching} near $\Sigma^\star$ if  for each finite path there is a trajectory that follows it. Analogously, we define \emph{infinite switching} near $\Sigma^\star$ if every forward infinite sequence of connections in the network is shadowed by nearby trajectories. 

Infinite switching near $\Sigma^\star$ follows from the proof of Theorem \ref{TeoremaSimetrico} and the results of Aguiar \emph{et al} \cite{ALR}. The solutions that realise switching lie in a tubular neighbourhood ${N}^{\Sigma^\star}$ of $\Sigma^\star$, hence from the results of Rodrigues \cite{Rodrigues3}, it follows that:

\begin{figure}
\begin{center}
\includegraphics[height=8cm]{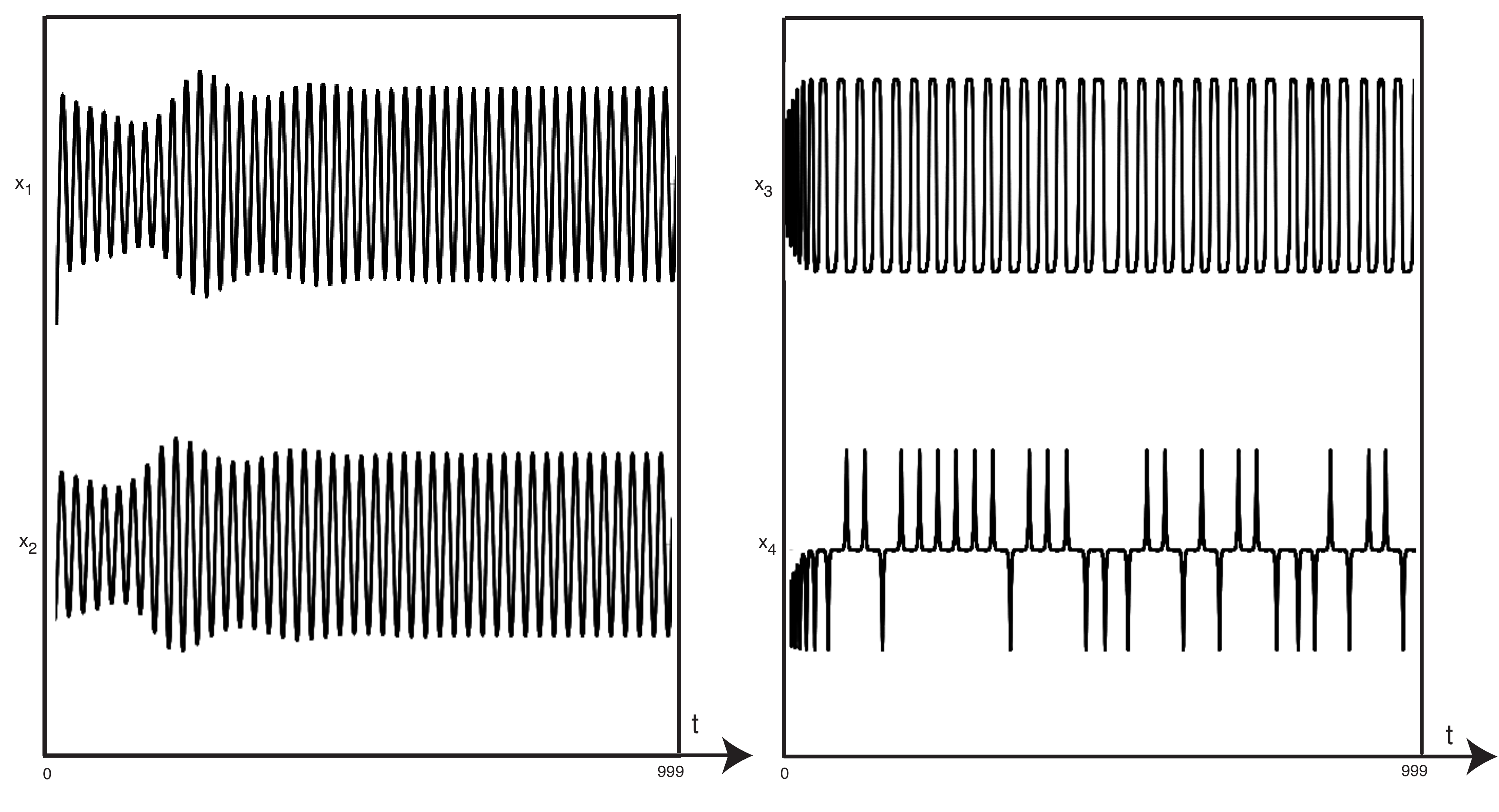}
\end{center}
\caption{\small Time series for the trajectory with initial condition $(-0.5000, -0.1390, -0.8807, 0.3013)$ for the flow of $\XX_1$, with  $\alpha_1=1$, $\alpha_2=-0.1$ and $\lambda_1=0.05$. }
\label{simulation1(ts)}
\end{figure}

\begin{corollary}
\label{Switching}
 There is  a set of initial conditions with positive Lebesgue measure for which there is finite switching near the network $\Sigma^\star$. 
 There is also infinite switching, real{ise}d by a set of initial conditions with zero Lebesgue measure.
\end{corollary}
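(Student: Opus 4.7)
The plan is to deduce both parts of the corollary from the geometry of the suspended horseshoe constructed in Theorem \ref{TeoremaSimetrico}, using the strips $\mathcal{R}_n \subset R_\vv$ and their images $\mathcal{S}_n$ under the first return map $\Psi$ as a coding device for admissible paths in $\Sigma^\star$.

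First I would reduce switching to an itinerary problem. Each horizontal strip $\mathcal{R}_n$ in $R_\vv \subset I_\vv^{in}$ parametrises trajectories that, after leaving $V$, wind approximately $n$ turns around the connection $[\vv\to\ww]$ before hitting $I_\ww^{in}$; composing with $\Psi_{\ww\to\vv}$ gives the vertical strip $\mathcal{S}_n$ across $R_\vv$. Thus an admissible finite or infinite path in $\Sigma^\star$ corresponds, in a surjective way, to a sequence of strip indices $(n_k)$, and the trajectory through $q \in R_\vv$ follows the path $(n_1,n_2,\dots)$ in the sense of items (1)--(3) of the definition of switching if and only if $\Psi^{j-1}(q) \in \mathcal{R}_{n_j}$ for every allowed $j$. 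Tubular neighbourhoods $N_{\Sigma^\star}$ and $V_i$ as in the definition are obtained from $V$, $W$ and the flow-boxes along the connections.

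For finite switching, fix a path $s^k=(c_1,\dots,c_k)$ and let $(n_1,\dots,n_k)$ be an associated sequence of strip indices. The set of initial conditions in $R_\vv$ that follow $s^k$ contains
\[
S(s^k) \;=\; \mathcal{R}_{n_1} \cap \Psi^{-1}(\mathcal{R}_{n_2}) \cap \cdots \cap \Psi^{-(k-1)}(\mathcal{R}_{n_k}).
\]
Because $\Psi$ maps each $\mathcal{R}_n$ diffeomorphically across all $\mathcal{S}_m$ respecting horizontal and vertical foliations (the Markov property verified in the proof of Theorem \ref{TeoremaSimetrico}), an inductive argument shows that $S(s^k)$ is a non-empty rectangle of positive two-dimensional Lebesgue measure. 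Saturating by the flow yields a three-dimensional set of positive Lebesgue measure in $\EU^3$ whose trajectories follow $s^k$, proving the first assertion.

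For infinite switching, an infinite path is realised exactly by the initial conditions in the nested intersection
\[
S(s^\infty) \;=\; \bigcap_{k\ge 1} \Psi^{-(k-1)}(\mathcal{R}_{n_k}),
\]
which is a subset of the Cantor set $\mathcal{H}\cap R_\vv$ constructed in Theorem \ref{TeoremaSimetrico}. Any such initial condition has its $\Psi$--orbit entirely contained in $\bigcup_n \mathcal{R}_n \subset R_\vv$, so its flow orbit remains in a tubular neighbourhood ${N}^{\Sigma^\star}$ for all positive time. By Corollary \ref{zero measure}, the set of such initial conditions on any transverse section has zero Lebesgue measure, and hence so does the suspension, giving the second assertion. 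The main obstacle is the inductive verification of positivity of measure in the finite case, which requires the Markov (horizontal/vertical strip) structure of $\Psi$ on $\bigcup_n \mathcal{R}_n$; this is already implicit in the uniformly hyperbolic conjugacy to a full shift established in Theorem \ref{TeoremaSimetrico}, so the argument reduces to invoking that structure rather than re-doing the estimates.
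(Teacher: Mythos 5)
Your proposal is correct and follows essentially the same route as the paper, which deduces the corollary directly from the horseshoe construction in the proof of Theorem~\ref{TeoremaSimetrico} (together with the switching results of \cite{ALR}) for the positive-measure finite part, and from Corollary~\ref{zero measure} for the zero-measure infinite part. The one point you gloss over is that coding paths by the winding index $n$ alone does not record which of the two connections $[\vv\to\ww]$ (upper or lower component of $\EU^3\setminus W^s_{loc}(\vv)$) is shadowed; one also needs that the helix crosses the whole closed curve $W^s_{loc}(\vv)\cap O_\ww^{out}$, so that the return map has strips in both components --- precisely the observation the paper records immediately after the corollary.
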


The complex eigenvalues force the spreading of solutions around the unstable manifold of $\ww$, allowing visits to all possible connections starting at $\ww$. The { transversality} enables the existence of solutions that follow heteroclinic connections on the two different connected components of $\textbf{S}^3 \backslash W^s_{loc}(\vv)$, the upper and the lower part on the wall of the cylinder --- see Figure \ref{horseshoe}.

From  the results of \cite{LR}, it also follows that  near $\Sigma^\star$ the only heteroclinic connections from $\vv $ to $\ww $ are the original ones and that the finite switching near $\Sigma^\star$ may be realised by an  $n$--pulse heteroclinic connection $[\ww\to\vv]$.
Moreover, among all the solutions which appear in $\mathcal{H}$, there are infinitely many knot types, inducing all link types.

\begin{figure}
\begin{center}
\includegraphics[height=12cm]{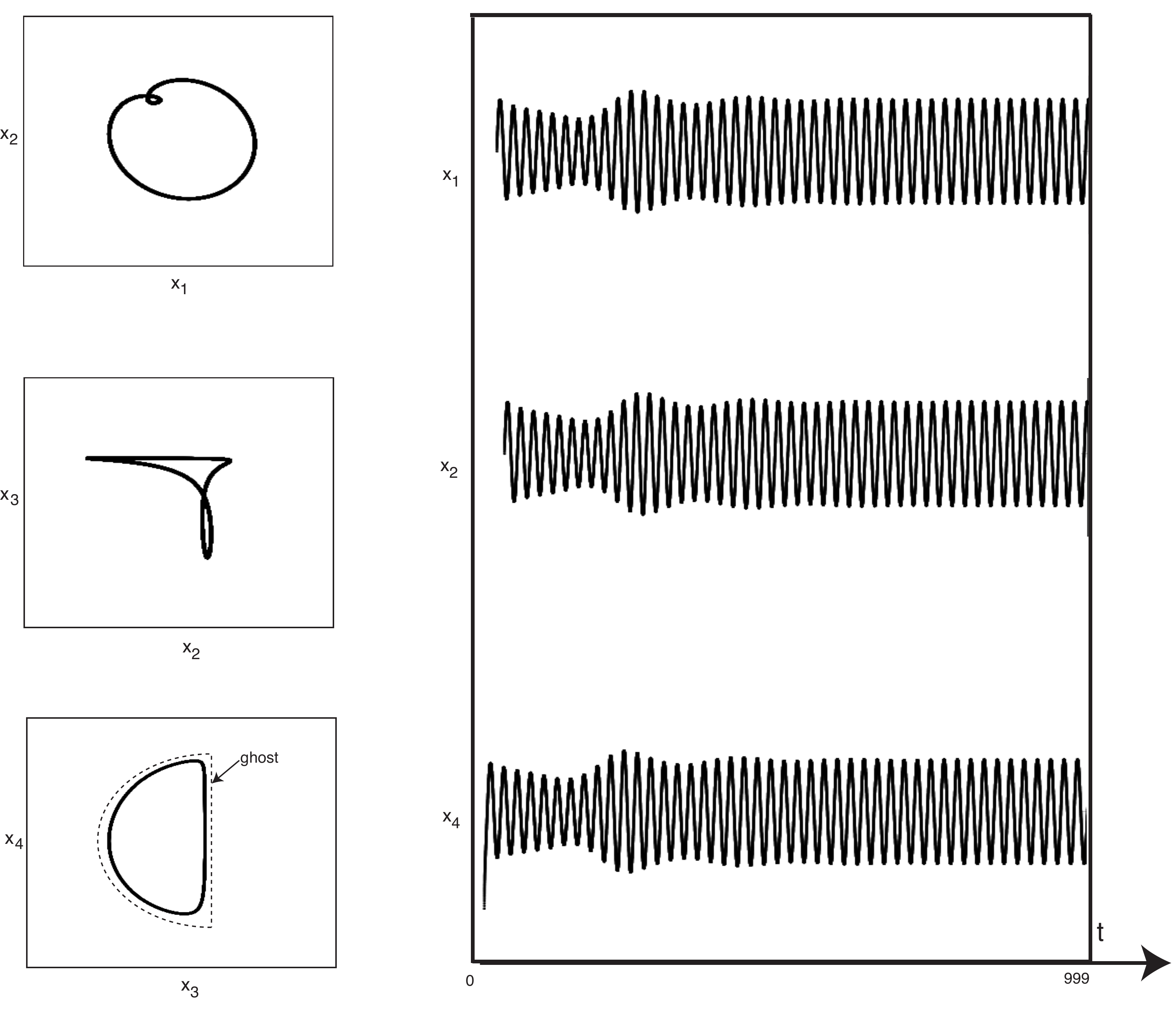}
\end{center}
\caption{\small  {Example of a trajectory that accumulates on an attracting periodic orbit.}
Left: Projection in the $(x_1,x_2)$, $(x_2,x_3)$ and $(x_3,x_4)$--planes of the trajectory with initial condition $(-0.5000, -0.1390, -0.8807, 0.3013)$ for the flow corresponding to the equation \eqref{example}, with $\lambda_1=0$, $\lambda_2=0.05$ for  $\alpha_1=1$ and  $\alpha_2=-0.1$. The dotted line on the $(x_3,x_4)$--plane indicates the position of the original cycle. Right: Time series for the corresponding trajectory.}
\label{simulation2(geom)}
\end{figure}

We illustrate the chaotic behaviour
in the {projected} phase portraits of Figure \ref{simulation1(geom)} and in the time series of
Figure~\ref{simulation1(ts)} corresponding to a trajectory that stays near the heteroclinic network. 
Observing Figure \ref{simulation1(geom)}, it is clear why we call the nonwandering set 
$\Lambda$ of $\XX_1$ a {\em spiralling set}.
The time series of Figure~\ref{simulation1(ts)} also  {suggests} heteroclinic switching: the trajectory follows a sequence of heteroclinic connections in a random order. 
We stress that {this 
occurs because the $\ZZ_2(\gamma_2)$--symmetry has been broken.}

Under generic perturbations (not necessarily equivariant), there is still an invariant topological sphere, since $\textbf{S}^3$ is normally hyperbolic, and the transverse connections are preserved. The spiralling set presented in Theorem~\ref{TeoremaSimetrico} may lose branches, changing its nature. 
 {Any compact, positively-invariant neighbourhood  of the original heteroclinic network will still be positively-invariant after perturbation and will then contain  an uniformly hyperbolic basic set displaying structurally stable homoclinic classes in its unfolding, where the results of Rodrigues \emph{et al} \cite{RLA} may be applied.
}
We will return to this issue in Section~\ref{BreakAllSymmetries} below.

\subsubsection{Nonhyperbolic behaviour}
Based in \cite{LR_proc}, applying the construction of the proof of Theorem \ref{TeoremaSimetrico} to the unstable manifold of $\ww$, we obtain the following result:

\begin{theorem}
\label{tangencies}
There are values  $\lambda_1^\star$  {of $\lambda_1$} arbitrarily close to 0, for which the flow of $\XX(X, \lambda_1^\star, 0)$ has a heteroclinic tangency between $W^u(\ww)$ and $W^s(\vv)$, coexisting with the transverse connections in $\Sigma^\star$. 
\end{theorem}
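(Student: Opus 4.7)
My plan is to adapt the spiral-to-helix construction from the proof of Theorem \ref{TeoremaSimetrico}, but this time applied to the unstable manifold $W^u(\ww)$ itself rather than to a generic vertical segment $\beta$. Set $\tilde C_u := W^u(\ww)\cap I_\vv^{in}$. For $\lambda_1=0$ the two-dimensional manifolds $W^u(\ww)$ and $W^s(\vv)$ coincide with $Fix(\ZZ_2(\gamma_2))\cap\EU^3$, so both traces on $I_\vv^{in}$ reduce to the segment $W^s_{loc}(\vv)\cap I_\vv^{in}=\{y=0\}$. For small $\lambda_1\ne 0$, Theorem \ref{TeoremaRede} implies that $\tilde C_u$ is a $C^1$-small perturbation of $\{y=0\}$ that crosses it transversely exactly at the two primary Bykov heteroclinic connections.

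Next, I would track the secondary component of $W^u(\ww)\cap I_\vv^{in}$ produced by the return dynamics. Following Proposition \ref{Structures}, the image of an arc of $\tilde C_u$ lying in $\{y>0\}$ under $\Phi_\vv$ is a spiral on $O_\vv^{out}$, which is then transported by $\Psi_{\vv\to\ww}$ into a spiral on $I_\ww^{in}$, by $\Phi_\ww$ into a helix on $O_\ww^{out}$ accumulating on the circle $W^u_{loc}(\ww)\cap O_\ww^{out}$, and finally by $\Psi_{\ww\to\vv}$ into a helix $H_{\lambda_1}\subset I_\vv^{in}$ accumulating on $\tilde C_u$. Since by construction $H_{\lambda_1}\subset W^u(\ww)$, every point of $H_{\lambda_1}\cap\{y=0\}$ is a heteroclinic point between $W^u(\ww)$ and $W^s(\vv)$, realising a secondary ($n$-pulse) connection from $\ww$ to $\vv$.

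The core of the argument then consists in showing that, using the explicit formulas \eqref{local_v}--\eqref{local_w} together with the Melnikov expansion of the separation $W^u(\ww)-W^s(\vv)$ already derived in Appendix \ref{AppendixTransversality}, the slope of $H_{\lambda_1}$ at its crossings with $\{y=0\}$ is a smooth function of $\lambda_1$ that attains values of both signs. The mechanism is the unbounded phase growth carried by the logarithmic factors $-\ln y/E_\vv$ and $-\ln r/E_\ww$ in $\Phi_\vv$ and $\Phi_\ww$: successive loops of $H_{\lambda_1}$ cross $\{y=0\}$ with rapidly rotating tangent directions, and the direction at any selected crossing depends continuously on $\lambda_1$. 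An intermediate-value argument then supplies values $\lambda_1^\star$, arbitrarily close to $0$, at which the tangent to $H_{\lambda_1^\star}$ at such a crossing is horizontal; this is precisely a tangency of the two-dimensional surfaces $W^u(\ww)$ and $W^s(\vv)$ at the corresponding heteroclinic trajectory in $\EU^3$. Since the primary Bykov connections of $\Sigma^\star$ persist for every $\lambda_1\ne 0$ by Theorem \ref{TeoremaRede}, they automatically coexist with the tangency at each $\lambda_1^\star$.

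The main obstacle will be the quantitative control of the slope of $H_{\lambda_1}$ at a suitably chosen crossing. One must balance the large logarithmic phase shift coming from the local maps against the first-order (linear in $\lambda_1$) perturbation of the invariant manifolds provided by the Melnikov computation, in order to establish that the tangent direction of $H_{\lambda_1}$ at one of its crossings with $\{y=0\}$ genuinely passes through the horizontal as $\lambda_1$ ranges over an arbitrarily small interval around $0$. Once this sign change is verified for one loop, applying the same argument to successive loops of $H_{\lambda_1}$ closer and closer to $\tilde C_u$ will yield the desired sequence of tangency parameters $\lambda_1^\star$ accumulating at zero.
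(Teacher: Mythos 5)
Your overall strategy coincides with the paper's: push the trace of $W^u(\ww)$ on the cross-sections through the local and transition maps of Proposition~\ref{Structures}, look for parameters where the resulting curve becomes tangent to the trace of $W^s(\vv)$, and use the logarithmic phase terms in \eqref{local_v}--\eqref{local_w} to make these parameters accumulate at $\lambda_1=0$; the coexistence with the transverse connections is likewise read off from Theorem~\ref{TeoremaRede}. So the route is the right one.

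The gap is exactly at the step you defer as ``the main obstacle'', and the mechanism you offer for it is not correct as stated. You claim that successive crossings of $H_{\lambda_1}$ with $\{y=0\}$ have ``rapidly rotating tangent directions''. They do not: $H_{\lambda_1}$ accumulates on the closed curve $\tilde C_u=W^u(\ww)\cap I_\vv^{in}$, so deep in the helix its tangent directions converge to those of $\tilde C_u$, and all sufficiently deep crossings with $\{y=0\}$ occur transversely with slopes of essentially the same sign as those of $\tilde C_u$ at its two primary crossings. An intermediate-value argument on the slope at such a crossing therefore has no reason to produce a zero. What is missing is the observation that the arc of $\tilde C_u$ in $\{y>0\}$ is \emph{not} a segment in the sense of \ref{subLocalGlobal}: being an arc of a closed curve between two consecutive intersections with $\{y=0\}$, it has an interior point of maximum height, so Proposition~\ref{Structures} applies separately to its two monotone halves and the image is a pair of helices joined at a \emph{fold point} (the image of that maximum), as in Figure~\ref{fold}. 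The tangency is created precisely when this fold point crosses the closed curve $W^s_{loc}(\vv)\cap O_\ww^{out}$: on one side of such a crossing the two strands issuing from the fold give two nearby transverse intersections, on the other side none. Since the height of the maximum is $O(\lambda_1)$, the angular coordinate of the fold grows without bound as $\lambda_1\to 0$ by \eqref{local_v}--\eqref{local_w}, so the fold winds around $O_\ww^{out}$ infinitely often and meets $W^s_{loc}(\vv)\cap O_\ww^{out}$ infinitely many times. This is the paper's argument; once the fold is identified, the Melnikov expansion and the quantitative slope estimates you propose are unnecessary.
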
 
 
The tangencies of Theorem \ref{tangencies} take place outside
the invariant set $\mathcal{H}$ of Theorem \ref{TeoremaSimetrico}. 
Since $E_\vv<C_\vv$, Newhouse's results \cite{Newhouse74, Newhouse79}   on homoclinic tangencies, extended to heteroclinic tangencies by Hayashi's
Connecting Lemma  \cite{Hayashi}, ensure the existence of infinitely many sinks nearby.

\begin{proof}
 {Let} $V$ and $W$ be  {the} cylindrical neighbourhoods of each equilibrium point 
defined in \ref{subLocalGlobal}.
 { The unstable manifold $W^u_{loc}(\ww)$ intersects the whole cylinder wall $I_\vv^{in}$ (not only the upper part)
 on a closed curve,}
as shown in Figure \ref{elipse}. For small $\lambda_1 \neq 0$, {the portion of the curve lying in the component of $I_\vv^{in}$ with $y>0$} has a point of maximum height that divides it in two components. {By Proposition \ref{Structures}, each one of these components} is mapped into a helix around $O_\ww^{out}$. The two helices taken together form a curve with  {at least one}
fold point (see Figure \ref{fold}). Varying $\lambda_1$ moves the fold point around $O_\ww^{out}$, exponentially fast in $\lambda_1$. As $\lambda_1$ tends to zero, the fold point will cross $W^s_{loc}(\vv) \cap O_\ww^{out}$ infinitely many times, creating heteroclinic tangencies.
\end{proof}

Each heteroclinic tangency  may be
 destroyed  locally by a small perturbation.
 As reported in \cite{LR_proc}, these tangencies correspond to the intersection of the local stable and unstable manifolds of a modified horseshoe
with infinitely many slabs, and they may be seen  as the product of Cantor sets with the property that the fractal dimension is large. This thickness is essentially due to the existence of infinitely many attracting solutions. 
 
{These} tangencies coexist with the hyperbolic  {Cantor} set $\mathcal{H}$ and the basins of  {attraction}
of the sinks 
 {lie} in the gaps of $\mathcal{H}$.

\begin{figure}
\begin{center}
\includegraphics[height=4cm]{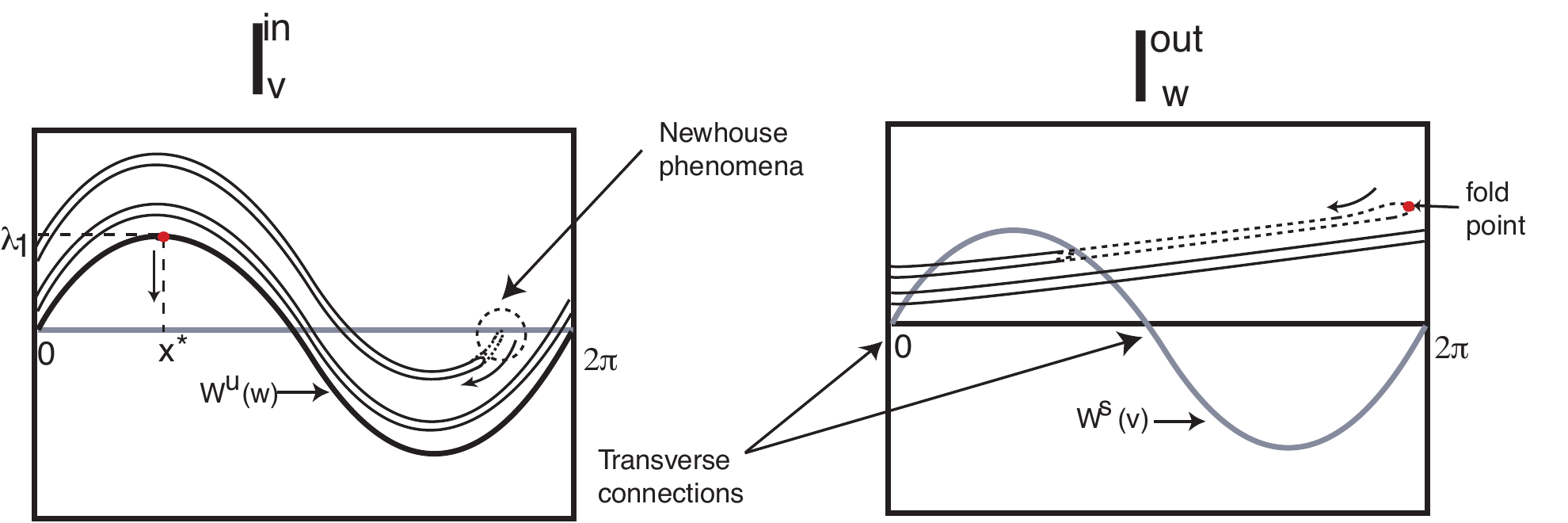}
\end{center}
\caption{\small Heteroclinic tangency between $W^u(\ww)$ and $W^s(\vv)$, coexisting with the transverse connections in $\Sigma^\star$.}
\label{fold}
\end{figure}

\subsection{Breaking the one-dimensional heteroclinic connection}

For $\lambda_2\ne 0$, the vector field $\XX_2=\XX(X,0,\lambda_2)$ 
is no longer $\textbf{SO(2)}$--equivariant  but  is still equivariant under the action of $\gamma_2$. 
Since the heteroclinic connections $[\vv \rightarrow \ww]$ lie on 
$Fix(\ZZ_2(\gamma_1))$,
these connections disappear, destroying the heteroclinic network $\Sigma^0$, but for  small values of $\lambda_2$
there will still be an attractor lying close to the original cycle.  

Theorem 6 of \cite{LR} shows that for sufficienly small $\lambda_2 \neq 0$, each heteroclinic cycle that occurred in the fully symmetric case is replaced by a stable
hyperbolic periodic solution. Using the reflection symmetry $\gamma_2$, two stable periodic solutions co-exist, one in each connected component of $\EU^3 \backslash Fix(\ZZ_2(\gamma_2))$. Their period tends to $+\infty$ when $\lambda_2 \rightarrow 0$ and their basin of attraction must contain the basin of attraction of $\Sigma^0$. For sufficiently large $\lambda_2$, saddle-node bifurcations of these two periodic solutions occur. Figure \ref{simulation2(geom)} illustrates the existence of a single attracting periodic solution in each connected component $\EU^3 \backslash Fix(\ZZ_2(\gamma_2))$ of the phase space.

\section{Breaking all the symmetry}\label{BreakAllSymmetries}
\subsection{Linked homoclinic cycles}
In this section, we describe the global bifurcations that are most important for our analysis, which appear when both $\lambda_1$ and $\lambda_2$ are non-zero and $\XX$ has no symmetry. Different chaotic dynamics in 
\eqref{example}  organise a complex network of bifurcations involving {the} rotating nodes, that had not yet been considered. The next theorem describes  the bifurcation diagram for $\XX$, shown in  Figure~\ref{BifDiagram}.

\begin{theorem}\label{ThHorseshoes}
The germ at the origin of the bifurcation diagram on the $(\lambda_1, \lambda_2)$--plane for $\XX(X,\lambda_1 \lambda_2)$ when $\lambda_1\lambda_2\ne 0$ satisfies:
\begin{enumerate}
\item\label{FiniteShoes}
For each $(\lambda_1, \lambda_2)$, there is 
a suspended uniformly hyperbolic horseshoe, topologically conjugate to a full shift over a finite number of symbols.
As $(\lambda_1, \lambda_2)$ tends to $(\lambda_1,0)$ with $\lambda_1 \ne 0$, the horseshoe accumulates on
the set $\mathcal{H}$ of Theorem~\ref{TeoremaSimetrico}.
\item\label{vHomo}
{There exists} a countable family of open tongues, tangent at $(0,0)$ to the $\lambda_2$--axis, where $\XX$ has an attracting periodic orbit. The closures of these tongues are pairwise disjoint. 
At the two curves comprising the boundary of each tongue there is an attracting Shilnikov homoclinic connection at $\vv$.
\item\label{wHomo}
{There exists} a countable family of open tongues, tangent at $(0,0)$ to the $\lambda_1$--axis, where $\XX$ exhibits a suspended  horseshoe, topologically conjugate to a full shift over an finite set of symbols. The closures of these tongues are pairwise disjoint. 
At the two curves comprising the boundary of each tongue there is an unstable Shilnikov homoclinic connection at $\ww$.
\item\label{twoHomo}
The two sets of curves in the parameter plane described in \eqref{vHomo} and \eqref{wHomo} meet at isolated points where the two types of homoclinic connections coexist. These codimension two points accumulate at the origin.
The closure of the pair of homoclinic orbits forms a link whose linking number tends to infinity as the points approach the origin of  the $(\lambda_1, \lambda_2)$--plane.
\end{enumerate}
\end{theorem}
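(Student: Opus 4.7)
The plan is to regard the family $\XX(X,\lambda_1,\lambda_2)$ as a two-parameter unfolding that combines the Bykov network $\Sigma^\star$ of Theorem~\ref{TeoremaRede} with the breaking of the one-dimensional connection $[\vv\to\ww]$ that occurs when $\lambda_2\ne 0$. The local linearisations and transition maps of \S\ref{subLocalGlobal} extend directly to the full two-parameter family: the formulas (\ref{local_v}) and (\ref{local_w}) are unchanged, $\Psi_{\ww\to\vv}$ is still a rotation, and the only object that depends non-trivially on $\lambda_2$ is the connection transition $\Psi_{\vv\to\ww}(\cdot,\lambda_1,\lambda_2):O_\vv^{out}\to I_\ww^{in}$, which acquires a translation of order $O(\lambda_2)$ whose leading coefficient can be computed by a Melnikov-type integral in the spirit of Appendix~\ref{AppendixTransversality}. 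All four statements will be obtained by tracking this single perturbation through the first-return map $\Psi$ on $I_\vv^{in}$.

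For item~\eqref{FiniteShoes}, the vertical strips $\mathcal{S}_n\subset R_\vv$ built in the proof of Theorem~\ref{TeoremaSimetrico} are shifted rigidly by the $O(\lambda_2)$ perturbation; using the explicit exponents in (\ref{local_v}) and (\ref{local_w}), only the strips with index $n\le N(\lambda_1,\lambda_2)\sim \log(1/|\lambda_2|)$ still cross $R_\ww$ before reaching $W^s_{loc}(\ww)$. On this finite collection the Conley--Moser conditions persist by $C^1$-continuity, giving a uniformly hyperbolic suspended horseshoe conjugate to the full shift on $N$ symbols, and the divergence $N\to\infty$ as $\lambda_2\to 0$ yields the claimed accumulation on $\mathcal{H}$.

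For items~\eqref{vHomo} and~\eqref{wHomo} I would track the one-dimensional manifolds $W^u(\vv)$ and $W^s(\ww)$ as functions of $(\lambda_1,\lambda_2)$. By Proposition~\ref{Structures}, the image of a segment transverse to $W^u(\vv)$ is a spiral in $I_\vv^{in}$ accumulating on $W^s_{loc}(\vv)$; homoclinic orbits at $\vv$ correspond to zeros of the signed displacement of $W^u(\vv)$ from $W^s(\vv)$, and the implicit function theorem produces a countable collection of smooth codimension-one curves. These curves are tangent at $(0,0)$ to the $\lambda_2$-axis because on $\lambda_1=0$ the invariance of $\mathrm{Fix}(\ZZ_2(\gamma_2))\cap\EU^3$ forces $W^u(\vv)$ to land on $W^s(\vv)$ after passing $\ww$, so the $\lambda_1$-component of the displacement vanishes to leading order. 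The saddle index at $\vv$ is
\[
\delta_\vv=\frac{C_\vv}{E_\vv}=\frac{\alpha_1-\alpha_2}{\alpha_1+\alpha_2}>1,
\]
so the stable Shilnikov theorem gives, between any two consecutive homoclinic curves, a unique attracting periodic orbit whose basin forms the required open tongue, proving~\eqref{vHomo}. The symmetric analysis for $W^s(\ww)$ produces a second countable family of curves, tangent this time to the $\lambda_1$-axis since the one-dimensional connection $[\vv\to\ww]$ persists on $\lambda_2=0$ and forces a homoclinic intersection there; the analogous saddle index at $\ww$ now gives the chaotic Shilnikov regime, and the classical horseshoe construction inside the corresponding tongues yields the suspended finite-alphabet horseshoes of~\eqref{wHomo}.

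For item~\eqref{twoHomo}, the two families have linearly independent tangent directions at the origin, so every curve of~\eqref{vHomo} meets every curve of~\eqref{wHomo} transversely at an isolated codimension-two point, and the resulting doubly-indexed sequence accumulates at $(0,0)$. At such a point the two homoclinic orbits are disjoint closed curves in $\EU^3\simeq S^3$ and thus form a link. To compute the linking number I would count crossings on a small transverse disc to the original connection $[\vv\to\ww]$: the $n$-th homoclinic at $\vv$ makes $n$ turns inside $W$ around $W^s_{loc}(\ww)$ and the $m$-th homoclinic at $\ww$ makes $m$ turns inside $V$ around $W^u_{loc}(\vv)$, giving a lower bound $\min(m,n)$ that diverges as the intersection point approaches the origin. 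The hardest step will be the Melnikov-type computation controlling the leading dependence of the displacement functions on both parameters, in particular establishing the sharp tangency direction of each family of homoclinic curves and verifying that the Shilnikov non-degeneracy conditions hold uniformly along the accumulating sequence.
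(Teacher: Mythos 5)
Your overall route---the local transition maps of \ref{subLocalGlobal}, the spiral/helix geometry of Proposition~\ref{Structures}, the $O(\lambda_2)$ off-centring of the spirals that truncates $\mathcal{H}$ to finitely many strips, the Shilnikov dichotomy governed by the saddle indices at $\vv$ and $\ww$, and the transverse crossing of two families of homoclinic curves---is essentially the one the paper follows (the paper delegates the quantitative part to Theorems 7 and 8 of \cite{LR} and gives only the geometric transition picture). Your added scaffolding, the estimate $N\sim\log(1/|\lambda_2|)$ for the number of surviving strips and the turn-counting lower bound for the linking number, is consistent with that picture and is a reasonable way to make item \eqref{twoHomo} precise.

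There is, however, one step that fails as stated: your justification of the tangency directions in items \eqref{vHomo} and \eqref{wHomo}. You claim that on $\lambda_1=0$ the invariance of $Fix(\ZZ_2(\gamma_2))\cap\EU^3$ forces $W^u(\vv)$ to land on $W^s(\vv)$, and that on $\lambda_2=0$ the persistence of $[\vv\to\ww]$ forces a homoclinic intersection at $\ww$. Neither is true: $W^u(\vv)$ leaves $\vv$ along the $x_3$--direction, on which $\gamma_2$ acts by $-1$, so $W^u(\vv)$ is transverse to $Fix(\ZZ_2(\gamma_2))$ at $\vv$ and does not lie in the invariant two-sphere carrying $W^s(\vv)$; on the $\lambda_2$--axis the cycle is in fact replaced by an attracting periodic orbit (Theorem 6 of \cite{LR}, as quoted in the paper), and on the $\lambda_1$--axis item \eqref{homoclinic} of Theorem~\ref{TeoremaRede} explicitly excludes homoclinic connections. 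If your premise were right, homoclinic loci would persist along the axes themselves, not merely be tangent to them at the origin. The actual source of the tangency is the local passage near the \emph{opposite} equilibrium: for a homoclinic at $\vv$, the branch of $W^u(\vv)$ enters $I_\ww^{in}$ at distance of order $|\lambda_2|$ from $W^s_{loc}(\ww)$ and therefore exits $O_\ww^{out}$ at height of order $|\lambda_2|^{\delta_\ww}$ by \eqref{local_w}, while the closed curve $W^s(\vv)\cap O_\ww^{out}$ it must hit sits at height of order $\lambda_1$; since $\delta_\ww=C_\ww/E_\ww>1$, the solution curves scale as $\lambda_1\sim c\,|\lambda_2|^{\delta_\ww}$ (modulated by the rotation angle $-\ln|\lambda_2|/E_\ww$, which produces the countable family), hence are tangent to the $\lambda_2$--axis; the symmetric computation with $\delta_\vv>1$ gives the tongues of \eqref{wHomo}. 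With that correction the rest of your argument goes through and matches the paper's intended proof.
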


\begin{figure}
\begin{center}
\includegraphics[width=135mm]{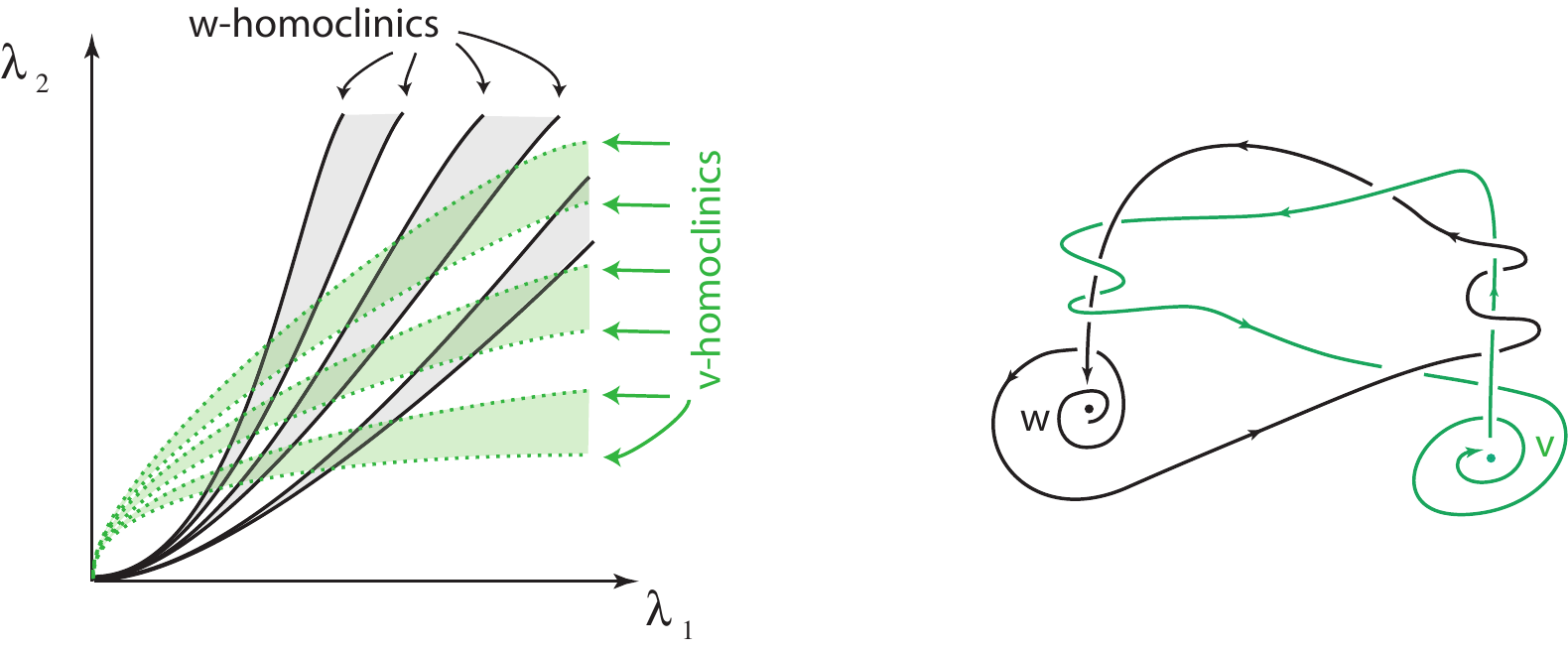}
\end{center}
\caption{\small Left: Qualitative bifurcation diagram for the family $\XX(X,\lambda_1, \lambda_2)$. 
At the solid curves tangent to the $\lambda_1$ axis there is an expanding Shilnikov  homoclinic at $\ww$. Consecutive pairs of these curves bound a tongue (shaded) where $\XX$ has infinitely many unstable periodic solutions associated to a suspended  horseshoe.
Contracting Shilnikov  homoclinics at $\vv$ appear at the dotted curves tangent to the $\lambda_2$ axis, that limit tongues where $\XX$ has an attracting period orbit. 
Right: Two linked Shilnikov  homoclinic loops appear when the two types of curve meet.}
\label{BifDiagram}
\end{figure}

Theorem~\ref{ThHorseshoes} follows  from Theorems 7 and 8 in \cite{LR}. 
We  {present}
an alternative proof, emphasising the transition between the two different types of {chaotic regimes and their geometry.
The  proof }can be carried out for Bykov cycles that are not symmetric. Symmetry is not a deciding factor in the creation of horseshoes, but it makes their existence more natural.

\begin{proof}
In  Theorem~\ref{TeoremaSimetrico}, for $\lambda_2=0$, we have found a  sequence $\mathcal{H}$ of suspended horseshoes  contained in the spiralling set $\Lambda$.
When  generic perturbation terms are added, the hyperbolic horseshoes $\mathcal{H}$ lose infinitely many branches,  only a finite number of  strips survive, as {stated} in  {\em \eqref{FiniteShoes}}. 

When $\lambda_1 \neq 0$ and $\lambda_2=0$,  {we have shown in Proposition~\ref{Structures} that} a segment $\beta$ of initial conditions in $I_\vv^{in}$, with $\beta$ transverse to $W^s(\vv)$, is mapped into a spiral in 
$I_\ww^{in}$ and then into a helix in  $O_\ww^{out}$ accumulating on $W^u(\ww)$.

 When  $\lambda_2 \neq 0$,  { the one-dimensional connection in the cycle is broken. The} 
spirals in $I_\ww^{in}$ are off-centred  (case (a) of Figure \ref{perturbation1_horseshoes}) and will turn
 {only} a finite number of times around $W^s(\ww)$. This is why the suspended horseshoes  in assertion {\em \eqref{FiniteShoes}} have a finite number of strips.
Suspended horseshoes arise when $W^u(\vv)$ is close to 
$W^s(\ww) \cap  I_\ww^{in}$, the number of  {strips}
increases  {as} 
$W^u(\vv)$ approaches  {$W^s(\ww)$.}

From the coincidence of the invariant two-dimensional manifolds near the equilibria in the fully symmetric case, we expect that when $\lambda_1$ is close to zero, $W^s(\vv)$ intersects the wall $O_\ww^{out}$ of the cylinder $W$ in a closed curve as in Figure \ref{elipse}. 
When both symmetries are broken, the Bykov cycle $\Sigma^\star$ is destroyed, giving rise to Shilnikov homoclinic cycles involving the saddle-foci $\vv$ and $\ww$. The equilibria have different Morse indices, thus the dynamics near each homoclinic cycle is qualitatively different, see 
Shilnikov \cite{Shilnikov65, Shilnikov68}:
 all the homoclinic orbits associated to $\vv$ are attracting since the contracting eigenvalue is larger than the expanding one. Each homoclinic cycle associated to $\ww$ has a suspended horseshoe near it and thus infinitely many periodic solutions of saddle type  occur in every neighbourhood of the homoclinicity of $\ww$.

The existence of homoclinic orbits is not a robust property, they occur along the curves in the  $(\lambda_1, \lambda_2)$--plane described in assertions \emph{ \eqref{vHomo}} and \emph{\eqref{wHomo}} --- details on these curves are given in \cite[Section 6]{LR}. 
 {Explicit approximate expressions for these curves may be obtained from the linear part of the vector field, using the transition maps obtained in \ref{subLocalGlobal}.}
At values of $(\lambda_1, \lambda_2)$ where the two types of curves cross, 
as in assertion \emph{\eqref{twoHomo}},  the  homoclinics at $\vv$ and $\ww$ occur at different {regions} in phase space.

\begin{figure}[h]
\begin{center}
\includegraphics[width=16cm]{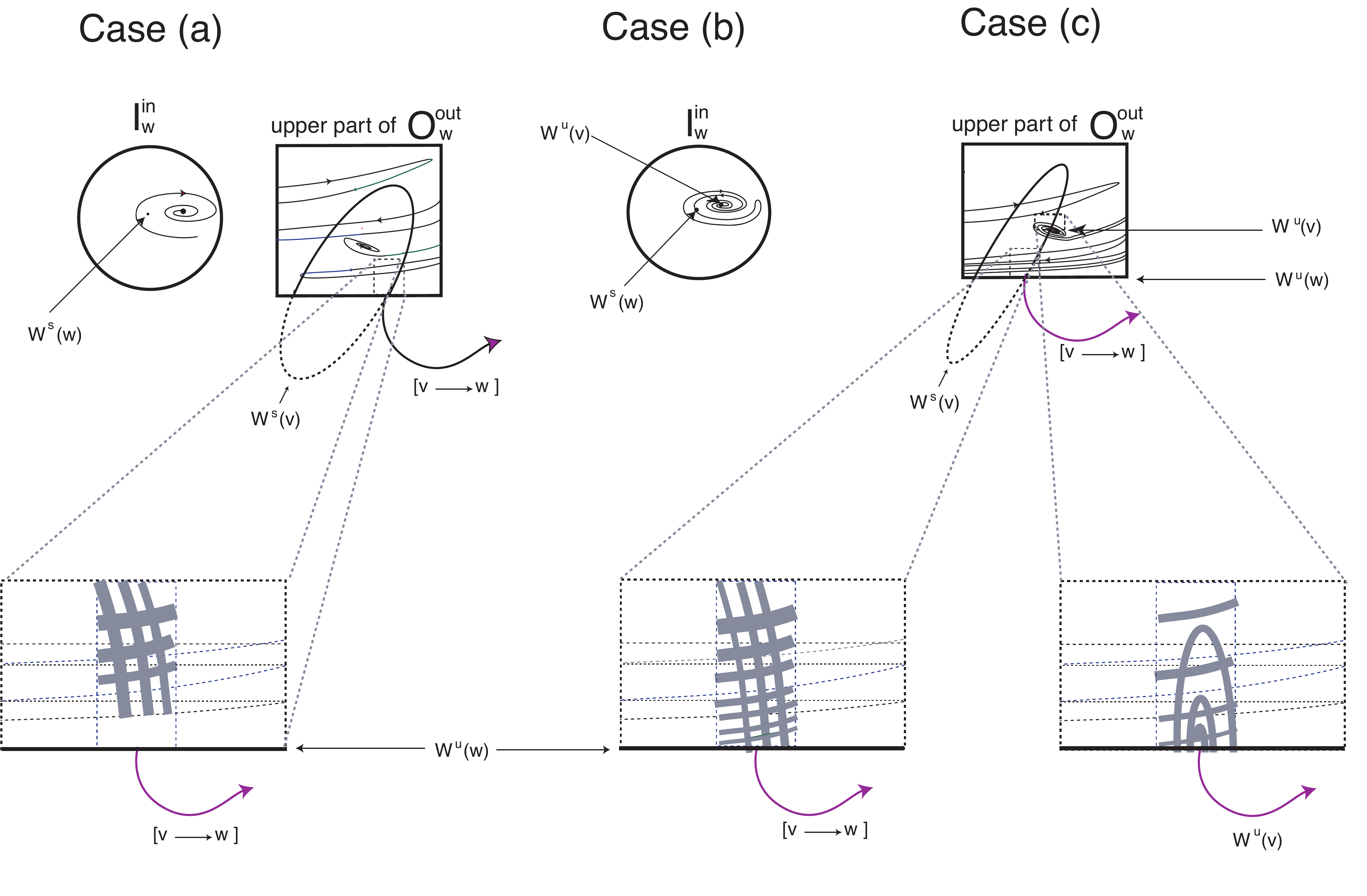}
\end{center}
\caption{\small There is a sequence of values of $\lambda_2$ for which horseshoes with infinitely many \emph{legs} appear and disappear as in a dance, being part of a very complex spiralling network of sets near the ghost of $\Sigma^\star$.
(a) A segment in $I_\vv^{in}$ transverse to $W^s(\vv)$ is mapped into a spiral in $I_\ww^{in}$. When $\lambda_2$ varies, branches of this curve get closer to $W^s(\ww)$ and its image makes more turns around $O_\ww^{out}$, creating horseshoes with more strips.
(b) When one of the arms of the  double spiral $W^u(\ww)\cap I_\ww^{in}$ touches $W^s(\ww)$ a homoclinic loop at $\ww$ is created.
(c) The double spiral  $W^u(\ww)\cap I_\ww^{in}$ is mapped into a curve in $O_\ww^{out}$ that winds a few times around the cylinder and then spirals into a point in $W^u(\vv)$. A stable homoclinic loop at $\vv$ is created when the spiral centre touches the closed curve 
$W^s(\vv)\cap O_\ww^{out}$. This may happen for the same parameter values that yield the homoclinic at $\ww$.   }
\label{perturbation1_horseshoes}
\end{figure}

Case (b) of Figure~\ref{perturbation1_horseshoes} describes the homoclinic cycle at $\ww$: 
initially $W^u(\ww)$ meets $I_\vv^{in}$ at a closed curve, that is then mapped into a double spiral in $I_\ww^{in}$ centred at $W^u(\vv)\cap I_\ww^{in}$.
When this spiral meets $W^s(\ww) \cap  I_\ww^{in}$, an unstable homoclinic cycle associated to $\ww$ is created.
Similarly, the backwards image of $W^u(\vv)$ in $O_\ww^{out}$ is a closed curve, 
that iterates back into a double spiral in $I_\ww^{in}$ centred at $W^s(\ww)\cap I_\ww^{in}$,
creating  an asymptotically stable homoclinic orbit of $\vv$ when this spiral meets 
$W^u(\vv)\cap I_\ww^{in}$. The values of $(\lambda_1, \lambda_2)$ where these intersections happen correspond to the two types of curves in Figure~\ref{BifDiagram}.
The double spiral $W^u(\ww)\cap I_\ww^{in}$ is mapped into a spiral in $O_\ww^{out}$ as in Figure~\ref{perturbation1_horseshoes} (c). 
As one of the spiral arms gets closer to $W^s(\ww)$ in $I_\ww^{in}$, its image winds around 
$O_\ww^{out}$.

Along the curve  {in the parameter plane} where one of the arms of the double spiral  in  $W^u(\ww)\cap I_\ww^{in}$ touches  {$W^s(\ww)\cap I_\ww^{in}$,}
there are some parameter values where the centre of the spiral in $O_\ww^{out}$ meets the closed curve $W^s(\vv)\cap O_\ww^{out}$ (Figure~\ref{perturbation1_horseshoes} (c)).
At these points both homoclinic cycles coexist in different regions of the phase space and
the basin  of attraction of the homoclinic  at $\vv$ lies inside the gaps of the  suspended horseshoes that appear and disappear from the homoclinicity at $\ww$. 
\end{proof}

We observe a mixing of regular and chaotic dynamics, in  regions of parameter space $(\lambda_1, \lambda_2)$ where the two types of tongues overlap.  Examples of chaotic behaviour of this type should be regarded as quasiattractors due to the presence of stable periodic orbits within them. A lot more needs to be done before these transitions are well understood.

\subsection{Finite heteroclinic switching}
Generic breaking of the $\gamma_1$--symmetry destroys the heteroclinic network, because the two connections $[\vv\rightarrow\ww]$ break. Nevetheless, finite switching might be observed: for small values of $\lambda_2$ there are trajectories that visit neighbourhoods of finite sequences of nodes. This is because the spirals on top of the cylinder around $\ww$ are off-centred and will turn a finite number of times around $W^s(\ww)$. In this way we may obtain points whose trajectories follow short finite paths on the network. As $W^u(\textbf{v})$ gets closer to $W^s(\textbf{\ww})$ (as the system  moves closer to $\ZZ_2(\gamma_1)$ symmetry) the paths that can be shadowed get longer.

\section{Discussion}


The goal of this paper is to construct explicitly a two-parameter  {family of} polynomial differential equation{s}, in which each parameter controls a type of symmetry breaking. We prove analytically the transverse intersection of two invariant manifolds using the adapted Melnikov method which is very difficult in general. 

 {This article uses a systematic method to construct examples of vector fields with simple forms that make their dynamic properties amenable to analytic proof.
 The method, started in \cite{ACL BIF CHAOS}, consists of using symmetry to obtain  basic dynamics and then choosing special symmetry-breaking terms with a simple form that preserve the required properties, and that introduce some desired behaviour.}

Some dynamical properties  {in this article} follow applying results in the literature. We also describe the transition between the different types of dynamics. As far as we know, both the construction and the study of the transition between dynamics are new.
 {This} article  {is also}
 part of a  {program of} systematic study
of the dynamics near networks with rotating nodes{, together with \cite{LR}}. 
 {The aim of the present article is the study of}
the non-wandering set that appear{s}  in the flow whereas 
 {the focus of \cite{LR} is}
 the bifurcation diagram. 
 Along this discussion we 
 compare 
 our results to
 what is known for other models in the literature. 

\subsection{Shilnikov homoclinic cycles}
For three-dimensional flows, one  {unstable}
homoclinic cycle of Shilnikov type is enough to predict the existence of a countable infinity of suspended {spiralling} horseshoes \cite{Shilnikov65, Shilnikov68}.  Their existence does not require breaking the homoclinic connection as in the case of saddles whose linearisation has only real eigenvalues.   Under a dissipative condition, strange attractors similar to $\Lambda$ {of Theorem \ref{TeoremaSimetrico}} appear in the neighbourhood of a Shilnikov orbit \cite{Homburg}. These attractors are character{ise}d by the lack of uniform hyperbolicity, by the existence of a trajectory with positive Lyapunov exponent and by the existence of an open set in their basin of attraction. Many of the results in this paper are typical  {of the behaviour} in the neighbourhood of a single homoclinic cycle to a saddle-focus, although the dynamics in our case is 
 {richer, due to}
 the coexistence of different types of phenomena for the same vector field.

\subsection{Bykov cycles}
 {The example presented in this article}
has an interesting  {relation}
with the works of Glendinning and Sparrow \cite{GS1, GS2} about homoclinic cycles and T-points.  {These} authors studied the existence of multiround heteroclinic cycles in a two-dimensional  {parameter}
diagram. They found a logarithmic spiral of homoclinic cycles and more complicated Bykov cycles. 
 {In their article, these}
authors do not break the one-dimensional heteroclinic connection. Following the same lines, Bykov \cite{Bykov93, Bykov99} studied the codimension 2 case of the same kind of cycle for general systems and found infinitely many periodic solutions and homoclinic cycles of Shilnikov type.
Our results  {differ in some parts from those of Bykov}
but the similarities are consistent. More precisely, if we allow $\lambda_2$ to be a two-dimensional parameter, then for $\lambda_1$ fixed, we would find the same type of logarithmic spirals corresponding to homoclinic cycles. Another  {difference}
is that Theorem \ref{TeoremaSimetrico} and Corollary \ref{Switching}  do 
not depend on the ratio of the eigenvalues of the rotating nodes, as  {is the case} in \cite{Bykov99}.


\subsection{Robustly transitive sets}
The spiralling set $\Lambda$ of Theorem \ref{TeoremaSimetrico} is remarkably different from the robustly transitive sets described by Ara\'ujo and Pac\'ifico \cite{AP} and Morales \emph{el al} \cite{MPP}. This is easily seen using assertion~\emph{\eqref{wHomo}} of Theorem~\ref{ThHorseshoes} to obtain, for arbitrarily small $\lambda_2$, a homoclinic cycle to $\ww$ of Shilnikov type with the expanding eigenvalue greater than the contracting one. This induces the appearance of an arbitrarily large number of attracting  {and}
repelling periodic solutions. 
If  $\Lambda$ were robustly transitive, this would imply that $\XX_1$ could not be $C^1$--approximated by vector fields exhibiting either attracting or repelling sets, which would be a contradiction. Although the spiralling set $\Lambda$ is not robustly transitive, by Theorem~\ref{TeoremaRede} it is persistent under $\ZZ_2$--symmetric perturbations. 

\subsection{Contracting Lorenz models}
The study of these spiralling sets has not attracted as much attention as the Lorenz attractor because, in general, it is difficult to understand the topology associated to non-real eigenvalues.  The fact that these wild spiral sets contain equilibria makes them similar to Lorenz-like attractors described by Rovella \cite{Rovella}. Both examples show suspended horseshoes, coexisting with equilibria and periodic solutions. 

In our example, suspended horseshoes are present for an open set of parameters whereas in Rovella's example they  appear for a set of parameters with positive Lebesgue measure. Moreover, in our example, the suspended horseshoes coexist with heteroclinic tangencies and these in turn give rise to attracting periodic orbits, whose basins of attraction may be situated in the gaps of the horseshoes accumulating on the repelling original Bykov cycle \cite{Rodrigues3}. The attractor splits into infinitely many components, whereas Rovella's example satisfies Axiom A for a set of parameters with positive Lebesgue measure.

\subsection{Quasistochastic attractors}
The features of our example fit in the properties of the \emph{quasistochastic attractors} studied by Gonchenko \emph{et al} \cite{GST}. The tangencies proved in Theorem \ref{tangencies} give rise to attracting periodic solutions which coexist with the hyperbolic set $\mathcal{H}$; the basins of  {attraction}
of some sinks 
lie in the gaps of $\mathcal{H}$. 
 {The transitive non-isolated set $\Lambda$ surrounds  closed trajectories of different indices, in sharp contrast to what is expected of attractors that are either hyperbolic or Lorenz-like.}

 {Yet another feature that differs from}
hyperbolic sets is that $\Lambda$ should not possess the property of \emph{self-similarity}. There may exist infinitely many time scales on which behavior of the system is qualitatively different. We would like to stress that our attracting limit set $\Lambda$, composed by unstable orbits, may 
 {support}
 proper invariant measures (Sinai-Bowen-Ruelle measures) and, therefore, it may be studied in ergodic terms.

\medbreak
A generic perturbation of the partially symmetric vector field $\XX_1$ still has some  spiralling structure as reported in \cite{ACT1, ACT2}. 
It contains a finite sequence of topological horseshoes semiconjugate to full shifts over an alphabet with a finite number of symbols, instead of an infinite sequence. We believe that our example should be  { explored further} 
 because it puts together
 different phenomena. 
 {The} two different types of chaos reported in \cite{Rossler} have been observed in Theorem \ref{ThHorseshoes} and Figure \ref{perturbation1_horseshoes}. The behaviour near these specific networks can be lifted to larger networks as those reported in \cite{ALR, GST, Rodrigues2, RLA}.

\subsection{Final Remarks}
The properties of the family $\XX$ that we have described depend strongly on the orientation around 
the heteroclinic connection $[\vv \to \ww]$.
If the rotations inside $V$ and inside $W$ had opposite orientations, 
for some trajectories these two rotations in $V$ and $W$ would cancel out. The Bykov cycle would no  longer be
repelling. 
Newhouse phenomena are dense in a Bykov cycle of this type, and hence infinitely many strange attractors emerge. A systematic study of this case is in preparation using the concept of \emph{chirality}.

A lot more needs to be done before we understand well the dynamics of systems close to symmetry involving rotating nodes. 
Besides the interest of the study of the dynamics arising in generic unfoldings of an attracting heteroclinic network, its analysis is important because in the fully non-equivariant case the explicit analysis of the first return map seems intractable. Although the symmetry is not essential, in this article we were able to predict qualitative features  because the non-symmetric dynamics is close to symmetry.
\bigbreak
\textbf{Acknowledgements:} The authors would like to express their gratitude to Manuela Aguiar (University of Porto) for helpful discussions at the beginning
of this work. Also special thanks to Maria Lu\'isa Castro (University of Porto) for the numerical simulations in \emph{Matlab} shown in figure \ref{simulation1(geom)}. 

\appendix
\section{Transversality of invariant manifolds}\label{AppendixTransversality}
\subsection{Melnikov method revisited}
\label{Melnikov}

Melnikov \cite{Melnikov1} studied a method to find the transverse intersection of the the invariant manifolds for a time periodic perturbation of a homoclinic cycle. The pioneer idea of Melnikov is to make use of the globally computable solutions of the unperturbed system the computation of perturbed solutions. 
We start this appendix with a short description of this theory 
applied to saddle-connections. For a detailed proof for the homoclinic case, see Guckenheimer and Holmes \cite[section 4.5]{GH}.

If $X \in \RR^2, t \in \RR$ and $0<\varepsilon <\!\!<1$, consider the planar system:
\begin{equation}
\label{diffeo}
\dot{X}=f(X)+\varepsilon g(X,t)
\end{equation}
such that:
\begin{itemize}
\item there exists $T>0$ such that $g(X,t)=g(X, t+T)$ for all $t$ \emph{ie} $g$ is $T$-periodic;
\item for $\varepsilon=0$, the flow has a heteroclinic connection $\Gamma_0$ associated to two hyperbolic points $p_0$ and $p_1$;
\item the unstable manifold of $p_0$ coincides with the stable manifold of $p_1$.
\end{itemize}

Associated to the system (\ref{diffeo}), taking $\textbf{S}^1 \cong \RR / T$, we define the suspended system:
$$
\dot{X}=f(X)+\varepsilon g(X, \theta), \quad \dot{\theta}=1, \quad (X, \theta) \in \RR^2 \times S^1.
$$

With the above assumptions, $\{p_0\} \times \textbf{S}^1$ and $\{p_1\} \times \textbf{S}^1$ are hyperbolic periodic solutions for the suspended flow, whose invariant manifolds $\Gamma_0 \times \textbf{S}^1$ coincide. Since the limit cycles are hyperbolic, their hyperbolic continuation is well defined for $\varepsilon \neq 0$; hereafter we denote them by $\{p_0^\varepsilon \} \times \textbf{S}^1$ and $\{p_1^\varepsilon\} \times \textbf{S}^1$. Under general conditions (without symmetry for example), the heteroclinic connection $\Gamma_0 \times \textbf{S}^1$ is not preserved and for a non-empty open set in the parameter space, the invariant manifolds meet  transversely. Their intersection consists of a finite number of trajectories.

On a transverse cross section to  the perturbed system, the splitting of the stable and unstable manifolds
is measured by  the \emph{Melnikov function}:
\begin{equation}
\label{Meln1}
M(t_0)=\int^{+\infty}_{-\infty} f(q_0(t))\wedge g(q_0(t),t+t_0).\exp  \left(- \int^{t}_0 tr Df(q_0(s)) ds \right)dt,
\end{equation}
where $q_0(t)$ is the parametrisation of the solution of the unperturbed system (\ref{diffeo}) starting at $t_0=0$ on $\Gamma_0$. Recall that the \emph{wedge product} in $\RR^2$ of two vectors $(u_1, u_2)$ and $(v_1, v_2)$ is simply given by $u_1  v_2 - u_2 v_1$. 
When $f$ is  hamiltonian, then $tr Df(q_0(t))=0$ and {the expression for the }Melnikov function is simpler. 
The main result we will use is the following:

\begin{theorem}[Bertozzi \cite{Bertozzi}, Melnikov \cite{Melnikov1}, adapted]
Under the above conditions, and for $\varepsilon>0$ sufficiently small, if $M(t_0)$ has simple zeros, then $W^u(p_0^\varepsilon)$ and  $W^s(p_1^\varepsilon)$ intersect  transversely .
\end{theorem}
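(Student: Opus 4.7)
The plan is to follow the classical perturbation scheme of Melnikov, adapted from the homoclinic to the heteroclinic setting as in \cite[Section 4.5]{GH}. First I would pass to the autonomous extended system on $\RR^2 \times \textbf{S}^1$, fix a point $q_0(0)$ on $\Gamma_0$, and consider a local transverse section $\Sigma^{t_0}$ spanned by the normal $f(q_0(0))^\perp$ at the time slice $t=t_0$. Since $\{p_j\}\times \textbf{S}^1$ ($j=0,1$) are hyperbolic periodic orbits of the unperturbed suspended flow, the persistence of normally hyperbolic invariant manifolds yields $C^r$ continuations $\{p_j^\varepsilon\}\times \textbf{S}^1$ together with smooth local invariant manifolds $W^u_{\mathrm{loc}}(p_0^\varepsilon)$ and $W^s_{\mathrm{loc}}(p_1^\varepsilon)$ that are $O(\varepsilon)$-close to the unperturbed ones.

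Next I would parametrise trajectories on these manifolds near the section and expand them as
\begin{equation*}
q^{u,s}(t,t_0,\varepsilon) = q_0(t-t_0) + \varepsilon\, q_1^{u,s}(t,t_0) + O(\varepsilon^2),
\end{equation*}
valid uniformly on $(-\infty,t_0]$ and $[t_0,+\infty)$ respectively, with exponential decay of the correction terms inherited from hyperbolicity of the saddle endpoints. Substituting into \eqref{diffeo} and matching powers of $\varepsilon$ yields the linear variational equations $\dot q_1^{u,s} = Df(q_0(t-t_0))\, q_1^{u,s} + g(q_0(t-t_0),t)$, and the signed distance between the two perturbed manifolds along $f(q_0(0))^\perp$ on $\Sigma^{t_0}$ reads
\begin{equation*}
\Delta(t_0,\varepsilon) = \frac{\varepsilon\, f(q_0(0))\wedge\bigl(q_1^u(t_0,t_0)-q_1^s(t_0,t_0)\bigr)}{\|f(q_0(0))\|} + O(\varepsilon^2).
\end{equation*}

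The core computation is then to reduce the numerator above to the improper integral $M(t_0)$ of \eqref{Meln1}. Differentiating $f(q_0(t-t_0))\wedge q_1^{u,s}(t,t_0)$ and using the identity $\tfrac{d}{dt}\bigl(f(q_0)\wedge q_1\bigr) = \tr Df(q_0)\,\bigl(f(q_0)\wedge q_1\bigr) + f(q_0)\wedge g(q_0,\cdot)$, I would multiply by the integrating factor $\exp\!\bigl(-\int_0^{t}\tr Df(q_0(s))\,ds\bigr)$, integrate separately on $(-\infty,t_0]$ and $[t_0,+\infty)$, and observe that the boundary contributions at $\mp\infty$ vanish because $q_1^{u,s}(t,t_0)$ stays bounded while $f(q_0(t))$ decays exponentially at the hyperbolic saddles $p_0,p_1$. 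The two contributions combine to give exactly $M(t_0)$.

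The conclusion then follows from the implicit function theorem applied to $\widetilde\Delta(t_0,\varepsilon) := \Delta(t_0,\varepsilon)/\varepsilon$: at a simple zero $t_0^\star$ of $M$, we have $\widetilde\Delta(t_0^\star,0)=0$ and $\partial_{t_0}\widetilde\Delta(t_0^\star,0) = M'(t_0^\star)/\|f(q_0(0))\|\neq 0$, so for each small $\varepsilon>0$ there is a unique zero $t_0(\varepsilon)\to t_0^\star$ of $\Delta(\cdot,\varepsilon)$, corresponding to a heteroclinic orbit of the perturbed suspended flow, and the non-vanishing derivative of the distance function guarantees that $W^u(p_0^\varepsilon)$ and $W^s(p_1^\varepsilon)$ cross transversely along this orbit. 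The main obstacle will be the uniform-in-$t_0$ control of the $O(\varepsilon^2)$ remainder on compact $t_0$-sets together with the justification of absolute convergence of the Melnikov integral; both rely on Gronwall-type estimates for the fundamental matrix of the variational equation near the saddles, combined with the exponential decay of $q_0(t)$ as $t\to\pm\infty$.
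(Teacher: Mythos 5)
The paper does not actually prove this statement: it is quoted as an adapted result of Melnikov and Bertozzi, with the reader referred to Guckenheimer and Holmes \cite[Section 4.5]{GH} for the detailed argument. Your outline is precisely that classical argument, correctly adapted to the heteroclinic and non--area-preserving setting (the integrating factor $\exp\bigl(-\int_0^t \tr Df(q_0(s))\,ds\bigr)$ is exactly what the Bertozzi adaptation adds to handle $\tr Df\neq 0$), so it is consistent with the source the paper relies on. One step deserves a slightly more careful justification than you give: in dismissing the boundary contributions at $t\to\mp\infty$ you invoke only the boundedness of $q_1^{u,s}$ and the exponential decay of $f(q_0(t))$, but the integrating factor itself may grow exponentially when $\tr Df(p_j)=\lambda_u+\lambda_s\neq 0$; the terms still vanish because the product decays at the rate $-\lambda_s>0$ (backward in time at $p_0$) and $\mu_u>0$ (forward in time at $p_1$), and the same bookkeeping is what gives absolute convergence of $M(t_0)$. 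With that remark supplied, your proof is complete and is the intended one.
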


This result is important because it allows to prove the existence of transverse (and hence robust) homo and heteroclinic connections as we proceed to do.

\subsection{Proof of transversality}\label{Transversality}
\begin{theorem}
\label{transv proof example}
If $\lambda_1 \neq 0$ and $\lambda_2=0$, the two-dimensional invariant manifolds of the equilibria $\ww$ and $\vv$ of $\XX(X,\lambda_1 \lambda_2)$ intersect  transversely  in $\EU^3$ along one-dimensional orbits.
\end{theorem}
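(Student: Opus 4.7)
The plan is to reduce the transversality question to the planar Melnikov setting laid out in Subsection~\ref{Melnikov}, using the angular coordinate $\varphi$ as the periodic time. The starting observation is the one exploited already in Proposition~\ref{orientation}: in spherical coordinates adapted to the $(x_1,x_2)$-plane, the organising centre $\XX_0$ satisfies $\dot{\varphi}=1$, and the perturbing term $\lambda_1 F_1$ affects $\dot{\varphi}$ only at order $O(\lambda_1)$, while depending on $\varphi$ only through $\cos\varphi$ and $\sin\varphi$, hence $2\pi$-periodically.

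First I would rewrite $\XX_1$ in the spherical variables $(\varphi,y_1,y_2,y_3)$ with $(x_1,x_2)=(y_1\cos\varphi,y_1\sin\varphi)$, $x_3=y_2$, $x_4=y_3$, and restrict to $\EU^3$, which becomes the invariant two-sphere $y_1^2+y_2^2+y_3^2=1$ in $(y_1,y_2,y_3)$-space with $\varphi\in\RR/2\pi$ playing the role of time. For $\lambda_1=0$ the flow on this two-sphere is the one depicted in Figure~\ref{Flow2}: the invariant circle $\{y_2=0\}$ carries a saddle-to-sink heteroclinic orbit $\Gamma_0$ from $\ww=(0,0,-1)$ to $\vv=(0,0,1)$, which when crossed with $S^1_\varphi$ produces the coincident two-dimensional invariant manifold $W^u(\ww)=W^s(\vv)$. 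The turning on of $\lambda_1\ne 0$ turns $\vv$ and $\ww$ into hyperbolic periodic orbits whose hyperbolic continuations $\vv^{\lambda_1},\ww^{\lambda_1}$ persist, and the question becomes whether the perturbed two-dimensional invariant manifolds meet transversely in $\EU^3$.

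Next I would check the hypotheses of the Bertozzi--Melnikov theorem. The planar unperturbed vector field $f$ on $\EU^2$ has a saddle-to-sink heteroclinic connection $\Gamma_0$; the perturbation $g$ is the projection of $\lambda_1 F_1$ onto $T\EU^2$, which by construction has a non-trivial $y_2$-component coming from the monomial $\lambda_1 x_1x_2x_4=\lambda_1 y_1^2 y_3\cos\varphi\sin\varphi$, i.e.\ a genuine transverse push off $\{y_2=0\}$. Since $\Gamma_0$ can be parametrised explicitly -- by integrating the radial equation on the invariant sphere, which is a one-dimensional polynomial ODE -- one obtains a smooth parametrisation $q_0(\varphi)$ on the circle $\{y_2=0\}\cap\EU^2$ tending exponentially to $\ww$ as $\varphi\to-\infty$ and to $\vv$ as $\varphi\to+\infty$. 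Inserting into \eqref{Meln1} gives
\[
M(\varphi_0)=\int_{-\infty}^{+\infty} (f\wedge g)\bigl(q_0(\varphi),\varphi+\varphi_0\bigr)\exp\!\left(-\int_0^{\varphi}\tr Df(q_0(s))\,ds\right)d\varphi,
\]
and the wedge $f\wedge g$ evaluated on $\Gamma_0$ reduces to a product of a smooth amplitude $A(\varphi)$ depending only on the radial profile $q_0$ (hence decaying at both ends) and the single trigonometric factor $\cos\varphi\sin\varphi\cdot\cos(\varphi+\varphi_0)$ coming from the perturbing monomial.

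The main obstacle is to show that this Melnikov function is non-trivial and has simple zeros. I would carry this out by decomposing the trigonometric factor into a finite sum of cosines of $\varphi+\varphi_0$ with integer frequencies and observing that $M(\varphi_0)$ is therefore a finite, non-degenerate trigonometric polynomial in $\varphi_0$ whose coefficients are Fourier transforms of $A(\varphi)$ times the exponential damping factor. Because $A$ is analytic, positive on $\Gamma_0$, and exponentially localised, those Fourier coefficients cannot all vanish, so $M(\varphi_0)$ is non-constant and has simple zeros. Applying the Bertozzi--Melnikov theorem then yields transverse intersection of $W^u(\ww^{\lambda_1})$ and $W^s(\vv^{\lambda_1})$ in the suspension, which translates into the claimed transverse one-dimensional heteroclinic connections $[\ww\to\vv]$ in $\EU^3$ for the original flow.
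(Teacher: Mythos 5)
You take essentially the same route as the paper: use $\dot\varphi=1$ to turn $\XX_1|_{\EU^3}$ into a time-periodic perturbation of a planar vector field with a heteroclinic connection, and apply the Melnikov criterion of Appendix~\ref{Melnikov}. (The paper works in the spherical angles $(\theta,\phi)$ of \eqref{system spherical coordinates}, where the wedge product and $\tr Df$ in \eqref{Meln1} are immediately meaningful; with your variables constrained to a two-sphere you would still have to pass to a planar chart.) One correction to your set-up: in the reduced system both $\vv$ and $\ww$ are saddles, and $\Gamma_0$ is the coincidence of the one-dimensional unstable manifold of $\ww$ with the one-dimensional stable manifold of $\vv$; it is not a saddle-to-sink connection --- that description belongs to the other invariant subspace, $Fix(\ZZ_2(\gamma_1))\cap\EU^3$, which carries the connections $[\vv\to\ww]$. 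If $\vv$ were a sink of the reduced system, the Melnikov measurement of the splitting would be vacuous, since a one-dimensional unstable manifold cannot split away from a full neighbourhood in a codimension-one fashion.

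The decisive gap is in your non-degeneracy argument. First, the amplitude $A$ is \emph{not} positive along $\Gamma_0$: the wedge product carries the factor $y_3=\cos\theta$ (through $y_1^2y_3$; in the paper's computation this is the factor $\sin(2\theta(t))$ in $E(t)$), which changes sign where the connection crosses the equator $\theta=\pi/2$. Second, even for a positive, analytic, exponentially localised amplitude, positivity only controls the zero-frequency Fourier coefficient; the coefficient at frequency $2$ --- the only one entering $M$, since the time dependence of the perturbation along $\Gamma_0$ is $\tfrac12\sin(2(\varphi+\varphi_0))$, not $\cos\varphi\sin\varphi\cos(\varphi+\varphi_0)$ as you wrote --- can perfectly well vanish for such an amplitude. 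So your premises do not yield that $M$ is non-constant, and non-constancy alone would not give simple zeros anyway (consider $1+\cos\varphi_0$). What actually works, and is what the paper does, is to exploit the single-harmonic structure: writing $\rho e^{-i\varsigma}=\int e^{-2it}E(t)\,dt$ gives $M(t_0)=\rho\cos(2t_0-\varsigma)$, so either $\rho=0$ and $M\equiv 0$, or every zero is simple; the real content is the non-vanishing $\rho\neq 0$, which must be established by computation as in \cite{ACL BIF CHAOS}, not by a soft positivity argument. Finally, convergence of the improper integral $M(t_0)$ should be verified rather than attributed to an ``exponential damping factor'': the paper checks that $f\wedge g$ is bounded and invokes \cite[Lemma 16]{ACL BIF CHAOS} for the behaviour of the exponential term near the two equilibria.
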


\begin{proof}In spherical coordinates
$$
x_1=r \sin \phi\ \sin \theta \cos \varphi \qquad
x_2=r \sin \phi\ \sin \theta \sin \varphi \qquad
x_3=r \cos \phi \sin \theta \qquad
x_4=r \cos \theta
$$
equations (\ref{example}) restricted to $\EU^3$ can be written as:
\begin{equation}
\begin{array}{l}
\dot{\theta} = \alpha_1  \sin \theta \cos (2\phi) + \frac{\alpha_2}{2}\sin(2\theta) + \frac{\lambda_{1} }{2}\sin^2(\phi)\sin^2(\theta)\cos(\phi)\sin(2\varphi)\\
\dot{\phi}=-\alpha_1  \cos (\theta) \sin (2\phi) -\frac{\lambda_{1} }{4} \sin^3(\phi)\sin(2\theta)\sin(2\varphi)\\
\dot{\varphi}=1
\label{system spherical coordinates}
\end{array}
\end{equation}

From the equation $\dot{\varphi}=1$, we get $\varphi(t)=t$, $t \in \RR$ and (\ref{system spherical coordinates}) is reduced to a three-dimensional non-autonomous differential equation
of the form:
$$
\begin{array}{l}
\dot{\theta}=f_1(\theta, \phi)+\lambda_{1} g_1(\theta, \phi, t)\\
\dot{\phi}=f_2(\theta, \phi)+\lambda_{1} g_2(\theta, \phi, t)\\
\end{array}
$$
where
$$
\begin{array}{ll}
f_1(\theta, \phi) =  \alpha_1  \sin \theta \cos (2\phi) + \frac{\alpha_2}{2}\sin(2\theta)&
f_2(\theta, \phi) = -\alpha_1  \cos (\theta) \sin (2\phi)\\
g_1(\theta, \phi, t) = \frac{1}{2}\sin^2(\phi)\sin^2(\theta)\cos(\phi)\sin(2t)\qquad&
g_2(\theta, \phi, t) = -\frac{1}{4} \sin^3(\phi)\sin(2\theta)\sin(2t) \ .
\end{array}
$$
The maps $g_1$ and $g_2$ are periodic in $t$ of period $\pi$. For the Melnikov function $M(t_0)$ defined in \eqref{Meln1} we write
$f=(f_1,f_2)$ and $g=(g_1,g_2)$. The parametrisation $q_0(t)$ of the connection $[\textbf{w} \rightarrow \textbf{v}]$ in the unperturbed system, $\lambda_1=0$, is defined by  $\phi=\frac{\pi}{2}+k\pi$, $k \in \{0,1\}$. Thus, in the unperturbed system, the connections $[\textbf{w} \rightarrow \textbf{v}]$ are parametrised by:
$$
q_0^1(t)=\left(\theta(t), \frac{\pi}{2}\right) \text{   and   } q_0^2(t)=\left(\theta(t), \frac{3\pi}{2}\right).
$$
Therefore, for $k \in \{0,1\}$, we have:
$$
\begin{array}{ll}
f_1(q_0^i(t)) =  (-1)^k \alpha_1  \sin (\theta(t)) + \frac{\alpha_2}{2}\sin(2\theta(t))\qquad&
f_2(q_0^i(t)) = 0\\
g_1(q_0^i(t), t+t_0) = 0 &
g_2(q_0^i(t), t+t_0) = (-1)^{k+1} \frac{1}{4} \sin(2\theta(t))\sin(2(t+t_0)) \ .\\
\end{array}
$$

To see that  the integral $M(t_0)$ converges, note that the  exterior product in the definition of the Melnikov function $M(t_0)$ is bounded since it is given by:
$$
\begin{array}{l}
f(q_0^i(t))\wedge g(q_0^i(t),t+t_0)=\\
=\left[ -\alpha_1  \sin (\theta(t)) + (-1)^{k+1}\frac{\alpha_2}{2}\sin(2\theta(t))\right] \frac{1}{4} \sin(2\theta(t))\sin(2(t+t_0))
\end{array}
$$
and
\begin{equation}
\label{traco}
tr Df(q_0(s))=\alpha_1  \cos (\theta(s))+\alpha_2 \cos (2\theta(s))
\end{equation}
hence the Melnikov integral  $M(t_0)$ does not depend on $\lambda_1$.
Since it has been shown in Aguiar \emph{et al} \cite[Lemma 16]{ACL BIF CHAOS} that for any $r>0$, the integral
$$
\int^{+\infty}_{-\infty} \exp{\left(  -\int^{t}_0 \alpha_2 r \cos (\theta(s))+\alpha_3 r^2 \cos (2\theta(s))ds \right)} dt
$$
converges, the convergence of $M(t_0)$ follows. It remaisn to prove that the roots of $M(t_0)$ exist and are simple, completing the proof of Theorem \ref{transv proof example}. This is the content of next lemma whose proof is similar to results in \cite{ACL BIF CHAOS}.
\end{proof}

\begin{lemma} 
The Melnikov integral $M(t_0)$ has simple roots.
\end{lemma}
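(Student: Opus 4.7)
The first observation is that the $t_0$-dependence in the Melnikov integrand enters only through the factor $\sin(2(t+t_0))$. Using $\sin(2(t+t_0))=\sin(2t)\cos(2t_0)+\cos(2t)\sin(2t_0)$ and the fact that the exponential trace factor $\exp(-\int_0^t\tr Df(q_0(s))\,ds)$ is independent of $t_0$, I would separate variables to write
\[
M(t_0)=I_1\cos(2t_0)+I_2\sin(2t_0)=\sqrt{I_1^2+I_2^2}\,\sin(2t_0+\phi_0),
\]
where $I_1,I_2$ are $t_0$-independent integrals and $\phi_0$ is a suitable phase. In particular $M$ is a pure sinusoid of period $\pi$ in $t_0$, so all its zeros are automatically simple as soon as $(I_1,I_2)\ne(0,0)$. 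The lemma therefore reduces to the non-triviality statement $M\not\equiv 0$.

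To prove $I_1^2+I_2^2>0$, I would exploit that the unperturbed $\theta(t)$ is governed by the first-order autonomous ODE $\dot\theta=-\alpha_1\sin\theta+\tfrac{\alpha_2}{2}\sin(2\theta)$, which linearises in the coordinate $u=\cos\theta$: along the connection $[\ww\to\vv]$ one checks $\dot u=(1-u^2)(\alpha_1-\alpha_2 u)$, strictly positive on $(-1,1)$ under $\alpha_2<0<\alpha_1$. The substitution $dt=du/[(1-u^2)(\alpha_1-\alpha_2 u)]$ converts every time-integral into an integral over $u\in(-1,1)$ of a rational function; in particular the trace (\ref{traco}) integrates in $u$ by partial fractions, and $\exp(-\int_0^t\tr Df\,ds)$ reduces to a product of powers of $(1-u)$, $(1+u)$ and $(\alpha_1-\alpha_2 u)$. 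With $I_1,I_2$ now written as explicit integrals over $(-1,1)$ of known algebraic weights against $\sin(2t(u))$ and $\cos(2t(u))$, I would establish $I_1^2+I_2^2>0$ by combining (i) an explicit computation in the degenerate limit $\alpha_2\to 0^-$, for which $t(u)=(2\alpha_1)^{-1}\log((1+u)/(1-u))$ is elementary and the resulting Fourier coefficient can be evaluated by residues and shown to be non-zero, with (ii) real-analyticity of $(\alpha_1,\alpha_2)\mapsto(I_1,I_2)$ on the connected parameter region $\{\alpha_2<0<\alpha_1,\,\alpha_1+\alpha_2>0\}$, to transport the non-vanishing away from the limiting parameter.

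The main obstacle is that a real-analyticity argument alone only excludes identical vanishing of $I_1^2+I_2^2$, not its vanishing on some proper analytic subvariety of parameter space. To close this gap I would either produce a direct sign argument exploiting that $\alpha_2<0$ forces the weight $H(t)=\tfrac{1}{4}\sin(2\theta(t))[-\alpha_1\sin\theta(t)+(-1)^{k+1}\tfrac{\alpha_2}{2}\sin(2\theta(t))]\exp(-\int_0^t\tr Df\,ds)$ to have a definite sign outside a compact $t$-interval, so that the Fourier coefficient $\int H(t)e^{2it}\,dt$ cannot cancel accidentally; or else deform the integration contour into the complex $t$-plane and read off a non-zero contribution from the poles corresponding to the saddle-focus eigenvalues at $\vv$ and $\ww$. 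This is precisely the step that mirrors \cite[Lemma 16]{ACL BIF CHAOS} for the analogous organising centre studied there, and the sign conditions $\alpha_1>0>\alpha_2$ and $\alpha_1+\alpha_2>0$ are expected to play the essential role in ruling out accidental cancellations.
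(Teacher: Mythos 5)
Your decomposition $M(t_0)=I_1\cos(2t_0)+I_2\sin(2t_0)$ is exactly the paper's, and your observation that a nontrivial sinusoid of period $\pi$ has only simple zeros is the same argument in different clothing: the paper assumes a non-simple zero, converts $M(t_0)=0$ and $M'(t_0)=0$ into $\tan(2t_0)=-I_1/I_2$ and $\tan(2t_0)=I_2/I_1$, and declares these incompatible (a real number cannot equal the negative of its own reciprocal unless $I_1=I_2=0$); existence of a zero is then read off from the polar form $\rho e^{-i\varsigma}=\int_{-\infty}^{+\infty}e^{-2it}E(t)\,dt$, giving $M(t_0)=\rho\,\mathrm{Re}\bigl(e^{i(2t_0-\varsigma)}\bigr)$ with zeros at $t_0=\tfrac14(\pi+2\varsigma+2n\pi)$. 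Where you genuinely differ is in isolating the degenerate case $(I_1,I_2)=(0,0)$, for which $M\equiv 0$ and every zero is non-simple. The paper does not address this at all: its contradiction step silently divides by the two integrals, and its final display exhibits zeros without checking $\rho\neq 0$. You are right that the entire content of the lemma is the non-vanishing $I_1^2+I_2^2>0$, and your proposal is the more honest of the two on this point.

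That said, your treatment of the non-degeneracy is a programme rather than a proof. The substitution $u=\cos\theta$, the explicit evaluation in the limit $\alpha_2\to 0^-$, and the appeal to real-analyticity in $(\alpha_1,\alpha_2)$ are all reasonable, and you correctly diagnose that analyticity only rules out identical vanishing, not vanishing on a proper subvariety of the parameter region. But neither of the two routes you offer to close that gap (a definite-sign argument for the weight $H(t)$ outside a compact interval, or a contour deformation picking up residues at the saddle-focus eigenvalues) is actually carried out, and the sign argument in particular is delicate because $\sin(2\theta(t))$ changes sign along the connection as $\theta$ crosses $\pi/2$. So your write-up, like the paper's, stops short of a complete proof of the lemma; the difference is that you identify precisely which step is missing, whereas the paper (and the reference \cite{ACL BIF CHAOS} it leans on) passes over it.
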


\begin{proof}
Using the expression (\ref{traco}) and the expression of the $\sin$ of the sum, we may write $M(t_0)$ in the form:
$$
M(t_0)= \cos(2t_0)\int_{-\infty}^{+\infty} [\sin(2t)] E(t) dt+ \sin(2t_0)\int_{-\infty}^{+\infty} [\cos(2t)] E(t) dt
$$
where
$$
E(t)=\left[-\alpha_1\sin(\theta(t))+(-1)^{k+1}\frac{\alpha_2}{2}\sin(2\theta(t))\right]\left[\frac{1}{4}\sin(2\theta(t))\right]\exp(-trDf(q_0(s))).
$$
Suppose that $t_0$ is a non-simple zero of $M(t_0)$. Since $t_0$ is a zero, then:
\begin{equation}
\label{(4.11)}
\cos(2t_0)\int_{-\infty}^{+\infty} [\sin(2t)] E(t) dt+ \sin(2t_0)\int_{-\infty}^{+\infty} [\cos(2t)] E(t) dt=0,
\end{equation}
or equivalently
$$
\tan(2t_0)=-\frac{\int_{-\infty}^{+\infty} \sin(2t) E(t) dt}{\int_{-\infty}^{+\infty} \cos(2t) E(t) dt}.
$$
Since $t_0$ is non-simple, differentiating (\ref{(4.11)}) with respect to $t_0$ we must have:
$$
-\sin(2t_0)\int_{-\infty}^{+\infty} [\sin(2t)] E(t) dt+ \cos(2t_0)\int_{-\infty}^{+\infty} [\cos(2t)] E(t) dt=0,
$$
and thus:
$$\tan(2t_0)=\frac{\int_{-\infty}^{+\infty} \cos(2t) E(t) dt}{\int_{-\infty}^{+\infty} \sin(2t) E(t) dt},$$
which is a contradiction. It remains to show that $M(t_0)$ has at least a zero. For this purpose, write:
$$
\rho \exp(-i\varsigma)=\int_{-\infty}^{+\infty} \exp(-2it) E(t) \qquad \text{whence} \qquad M(t_0)= \rho Re \left(\exp(i(2t_0-\varsigma ))\right).
$$
Thus $M(t_0)$ has zeros at $t_0=\frac{1}{4}(\pi + 2 \varsigma+2n\pi), n\in \ZZ$.
\end{proof}

\section{Symmetry breaking perturbations}\label{AppendixTable}

List of homogeneous polynomial vector fields of degree 3, tangent to $\EU^3$ and their symmetries in
$\textbf{SO(2)} \times \ZZ_2 (\gamma_2)$, that may be used for perturbations as in section \ref{SymmetryAlong}.
Adapted from Aguiar~\cite{tesemanela}.
\medbreak

\noindent Perturbing terms with $\textbf{SO(2)} \times \ZZ_2 (\gamma_2)$-symmetry:
$$
(x_2x_4^2, -x_1x_4^2, 0,0)
\qquad
(x_2x_3^2, -x_1x_3^2, 0,0)
\qquad
(0,0,x_3x_4^2, -x_3^1x_4)
$$
\bigbreak\noindent
Perturbing terms with $\textbf{SO(2)}$-symmetry, not $ \ZZ_2 (\gamma_2)$-symmetric:
$$
(0,0, x_4^3, -x_3x_4^2)\qquad
(x_2x_3x_4, -x_1x_3x_4, 0, 0)\qquad
(0,0, x_3^2x_4, -x_3^3)
$$
\bigbreak\noindent
Perturbing terms with $ \ZZ_2 (\gamma_1) \times \ZZ_2 (\gamma_2)$-symmetry, not $\textbf{SO(2)}$-symmetric:
\begin{center}
\begin{tabular}{llll}
$(x_1^2 x_2, -x_1^3,0,0)$ & $
(x_2^3, -x_1x_2^2, 0,0)$ & $
(0, x_1x_4^2, 0, -x_1x_2x_4)$ & $
(x_1x_4^2, 0,0,-x_1^2x_4)$
\\
$(x_2x_3^2, 0, -x_1x_2x_3,0)$ & $
(0, x_1x_3^2, -x_1x_2x_3,0)$ & $
(0,x_2x_3^2, -x_2^2x_3,0)$ & $
(x_1x_2^2, -x_1^2x_2, 0,0)$
\\
$(x_2x_4^2, 0, 0, -x_1x_2x_4)$&
$(0,x_2x_4^2, 0, -x_2^2x_4)$&
$(x_1x_3^2, 0, -x_1^2x_3, 0)$&
{}
\end{tabular}
\end{center}
\bigbreak\noindent
Perturbing terms with $ \ZZ_2 (\gamma_1)$-symmetry, not $\textbf{SO(2)}$-symmetric nor $\ZZ_2 (\gamma_2)$-symmetric:
\begin{center}
\begin{tabular}{llll}
$(0,0,x_1x_2x_4, -x_1x_2x_3)$&
$(0,x_2x_3x_4, 0, -x_2^2x_3)$&
$(x_1x_2x_3, -x_1^2x_3, 0,0)$&
$(0,0,x_1^2x_4, -x_1^2x_3)$
\\
$(0,x_1x_3x_4,-x_1x_2x_4, 0)$&
$(0,x_1x_3x_4, 0, -x_1x_2x_3)$&
$(0, x_2x_3x_4, -x_2^2x_4, 0)$&
$(x_2x_3x_4, 0, -x_1x_2x_4, 0)$ 
\\
 $(x_2x_3x_4, 0,0, -x_1x_2x_3)$&
 $(0,0, x_2^2x_4, -x_2^2x_3)$&
 $(x_1x_3x_4, 0,-x_1^2x_4, 0)$&
 $(x_1x_3x_4, 0,0,-x_1^2x_3)$ 
\end{tabular}
\end{center}
\bigbreak\noindent
Perturbing terms with $ \ZZ_2 (\gamma_2)$-symmetry, not $\textbf{SO(2)}$-symmetric nor $\ZZ_2 (\gamma_1)$-symmetric:
\begin{center}
\begin{tabular}{llll}
$(x_3^2x_4, 0, -x_1x_3x_4, 0)$&
$(0, x_1^2x_4, 0, -x_1^2x_2)$&
$(x_2^2x_4, -x_1x_2x_4, 0,0)$&
$(0,x_2^2x_4, 0, -x_2^3)$
\\
$(0,x_3^2x_4,0, -x_2x_3^2)$&
$(x_2^2x_4, 0, 0, -x_1x_2^2)$ &
$(x_3^2x_4, 0,0, -x_1x_3^2)$&
$(0,x_3^2x_4, -x_2x_3x_4,0)$
\\
$(x_1^2x_4, 0,0,-x_1^3)$&
$(x_1x_2x_4, -x_1^2x_4, 0,0)$&
$(0, x_1x_2x_4, 0, -x_1x_2^2)$&
$(0,0,0,x_1x_3x_4, -x_1x_3^2)$
\\
$(x_4^3, 0,0, -x_1x_4^2)$&
$(x_1x_2x_4, 0,0,-x_1^2x_2)$&
$(0,0,x_2x_3x_4, -x_2x_3^2)$&
$(0, x_4^3, 0, -x_2x_4^2)$
\end{tabular}
\end{center}

\bigbreak\noindent
Perturbing terms without any of the symmetries above:
\begin{center}
\begin{tabular}{llll}
$(x_1x_2x_3, 0, -x_1^2x_2,0)$&
$(x_3x_4^2, 0, 0, -x_1x_3x_4)$&
$(0, x_3^3, -x_2x_3^2,0)$&
$(x_1^2x_3, 0,-x_1^3, 0)$
\\
$(0, x_1^2x_3, -x_1^2x_2,0)$&
$(x_2^2x_3, -x_1x_2x_3, 0,0)$&
$(0,0,x_1x_4^2, -x_1x_3x_4)$&
$(0, x_3x_4^2, 0, -x_2x_3x_4)$
\\
$(x_2^2x_3, 0,-x_1x_2^2, 0)$&
$(0,0,x_2x_4^2, -x_2x_3x_4)$&
$(0,x_1x_2x_3, -x_1x_2^2, 0)$& 
$(x_3^3, 0, -x_1x_3^2, 0)$
\\
$(x_3x_4^2,0, -x_1x_4^2, 0)$&
$(0, x_2^2x_3,-x_2^3, 0)$&$(0,x_3x_4^2, -x_2x_4^2, 0)$&
\end{tabular}
\end{center}

\end{document}